\documentclass[11pt]{amsart}
\usepackage{amsfonts}
\usepackage{bbm}
\usepackage{amsfonts,amssymb,amsmath,amsthm}
\usepackage{url}
\usepackage{enumerate}
\usepackage{bbm}
\usepackage[all]{xy}
\usepackage[pdftex, colorlinks, citecolor=red, backref=page]{hyperref}
\usepackage{color,xcolor}
\usepackage{dsfont}
\usepackage{lmodern}
\usepackage{microtype}
\usepackage{mathtools}
\usepackage{enumitem}

\newtheorem{theorem}{Theorem}[section]
\newtheorem{lemma}[theorem]{Lemma}
\newtheorem{definition}[theorem]{Definition}

\newtheorem{proposition}[theorem]{Proposition}
\newtheorem{corollary}[theorem]{Corollary}
\theoremstyle{definition}
\newtheorem{remark}[theorem]{Remark}

\newtheorem{example}[theorem]{Example}
\newcommand{\dist}{\operatorname{dist}}

\newcommand{\norm}[1]{\lVert #1\rVert}

\author{Qingnan An}
\address{School of Mathematics and Statistics, Northeast Normal University, Changchun, {\rm130024}, China}
\email{qingnanan1024@outlook.com}
\author{Zhichao Liu}
\address{School of Mathematical Sciences,
Dalian University of Technology,
Dalian, {\rm116024}, China }
\email{lzc.12@outlook.com}

\keywords{Complexity rank one; Uniform Roe algebra; Real rank zero; }

\subjclass[2020]{Primary 46L85, Secondary 46L05, 46L80, 58B34}
\begin{document}

\title[Complexity rank one implies real rank zero] {Complexity rank one implies real rank zero}

\begin{abstract}
We show that every unital $C^*$-algebras with complexity rank one has real rank zero. As an application, we prove that if $X$ is a bounded geometry metric space with asymptotic dimension at most one, then the uniform Roe algebra  $C_u^*(X)$ has real rank zero. In particular, the algebras $C_u^*|\mathbb{Z}|$ and $C_u^*|\mathbb{F}_n|$ have real rank zero. These results settle a series of open questions. %This provides  affirmative answers to some open questions.   %this answers \cite[Question 6.5]{JW} and \cite[Question 3.10]{LW}. %As a application, there is a stable finite, real rank zero $C^*$-algebra but doesn't have stable rank one, which answers a long standing open question.
\end{abstract}

\maketitle
\section*{Introduction}

The geometric concept of finite decomposition complexity was originally introduced by Guentner, Tessera, and Yu \cite{GTY} to study the topological rigidity of manifolds, such as the stable Borel conjecture. This concept is inspired by, and serves as a far-reaching generalization of, Gromov's finite asymptotic dimension \cite{G1}—a notion that has proven vital for K-theory computations in both topological and analytic settings. Yu proved the strong Novikov Conjecture for finitely generated groups with finite asymptotic dimension \cite{Yu98} and for groups coarsely embeddable into Hilbert spaces \cite{Yu00}, which led to many substantial works \cite{CG,GY,GTY,STY,Yu11}. Bridging coarse geometry and topological dynamics, this decomposition method was adapted into finite dynamical complexity \cite{GWY}, yielding tools like controlled K-theory to inductively decompose and compute crossed product $C^*$-algebras. Motivated by the success of decomposability in both coarse geometry \cite{GTY,GTY2} and dynamics \cite{GWY}, Willett and Yu \cite{WY} introduced the concept of decomposability for $C^*$-algebras over a specified class of $C^*$-algebras.
A $C^*$-algebra is defined to have complexity rank zero if it is locally finite-dimensional, and it has complexity rank
at most $n+1$ if it decomposes over the class of $C^*$-algebras of complexity rank at most $n$. Notably, numerous operator algebras arising naturally in coarse geometry, groupoids, and dynamical systems have finite complexity.

The primary application of finite complexity rank relates to the well-known  Universal Coefficient Theorem (UCT) \cite{RS}.
Specifically, Willett and Yu \cite{WY} proved that all separable nuclear $C^*$-algebras satisfy the UCT if and only if any unital Kirchberg algebra with zero K-theory decomposes over the class of finite-dimensional $C^*$-algebras. Since a unital Kirchberg algebra is not locally finite-dimensional, the latter condition is equivalent to having complexity rank one. There are abundant connections between complexity rank, real rank and finite nuclear dimension \cite{JW}. Prior to this work, it was unknown whether complexity rank one implies real rank zero. This question is also connected to several broader open problems; see \cite{JW,LW,WY}.

A closely related area is the study of uniform Roe algebras, where $C^*_u(X)$ reflects the coarse geometry of a discrete metric space $X$ with bounded geometry. While $X$ having asymptotic dimension zero corresponds to $C^*_u(X)$ being locally finite-dimensional, real rank zero need not hold when the  asymptotic dimension is two or higher, such as $C^*_u|\mathbb{Z}^2|$. This leaves the one-dimensional case as a special open gap. In particular, it was unknown whether $C_u^*\vert\mathbb{Z}\vert$ or $C_u^*\vert \mathbb{F}_2\vert$ has real rank zero; see Question 3.10 and the preceding paragraph in \cite{LW}.

In this paper, we answer the complexity rank part of \cite[Question 6.5]{JW}. The proof combines the finite-dimensional overlap formulation of complexity rank at most one with Lin’s theorem on almost commuting self-adjoint matrices, a Sylvester estimate, and a three-corner spectral decomposition. We prove that every unital $C^*$-algebra with complexity rank at most one has real rank zero. %Combining this result with the %coarse-groupoid estimate relating the complexity rank of %$C_u^*(X)$ to the geometric complexity rank of $X$,
Furthermore, we show that $C_u^*(X)$ has real rank zero whenever $X$ is a bounded geometry metric space with  asymptotic dimension at most one. This implies that the algebras $C_u^*|\mathbb Z|$ and  $C_u^*|\mathbb{F}_n|$  have real rank zero, thereby giving affirmative answers to the corresponding questions raised in \cite[p. 111]{LW}.

%In this paper, we resolve the open problem The proof combines the finite-dimensional overlap formulation of complexity rank one with Lin’s theorem on almost commuting self-adjoint matrices and a three-corner spectral estimate. Specifically, our central technique allows us to prove that complexity rank one implies real rank zero. Applying this algebraic result to the coarse geometric setting, we note that the complexity rank of $C^*_u(\vert{}X\vert{})$ is controlled by the geometric complexity of $X$, ensuring that $C^*_u(\vert{}\mathbb{Z}\vert{})$ has complexity rank one. As a direct corollary of our main theorem, we establish that $C^*_u(\vert{}\mathbb{Z}\vert{})$ has real rank zero, thereby affirmatively answering \cite[Question 3.10]{LW} and contributing a partial solution to \cite[Question 6.5]{JW}.

%Moreover, our results shed light on higher-dimensional spaces. It is known that $C^*_u\vert{}\mathbb{Z}^2\vert{}$ has complexity rank at most two but lacks real rank zero; hence, our theorem implies that its complexity rank must be exactly two.

Moreover, our results also pin down the complexity rank of some higher-dimensional examples. Since $C^*_u\vert{}\mathbb{Z}^2\vert{}$ is known to have complexity rank at most two and to lack real rank zero, our theorem forces its complexity rank to be exactly two.

%The paper is organized as follows. Section 1 introduces the complexity rank and preliminaries. Section 2 establishes the Sylvester estimate used to control the  off-diagonal corner block. Section 3 develops the commuting perturbation and three-corner approximation. Section 4 proves the main result and presents some applications.

The paper is organized as follows.  Section 1 introduces the complexity rank and preliminaries. Then we prove a Sylvester estimate in Section 2 to control the off-diagonal corner block. Section 3 is devoted to the commuting perturbation argument and the three-corner approximation, and Section 4 states the main theorem and its applications.

\section{Preliminaries}

\begin{definition}\rm
  Let $A$ be a $\mathrm{C}^*$-algebra. Denote by $A_{sa}$ the set of  self-adjoint elements. %and by $A_+$ the positive cone of positive elements.
  % denote by $A^1$ the closed ball of $A$.
  For any $a,b\in A$, we write $a \approx_\varepsilon b$, if $\|a-b\|<\varepsilon$. Let $S \subset A$ be a finite set, then we write $a \in_\varepsilon S$, if there exists $s \in S$ such that $a \approx_\varepsilon s$.
  We use $[a,b]$ to denote $ab-ba$.
   %Let $B,C$ be subalgebras of $A$,  we write ``$B \subseteq_\varepsilon C$", if $b \in_\varepsilon C^1$ holds for any $b\in B^1$.
\end{definition}

\begin{definition}[\cite{BP}]\rm
  %Let $A$ be a unital $\mathrm{C}^*$-algebra. %$A$ is said to have stable rank one, if the set of invertible elements of $A$ is dense.
  A unital $C^*$-algebra $A$ is said to have real rank zero, if %the set of invertible self-adjoint elements is dense in the set $A_{sa}$ of self-adjoint elements of $A$.
  the set of self-adjoint elements with finite spectrum is norm dense in $A_{sa}$.
\end{definition}

\begin{definition}\rm
Let $\mathcal{C}$ be a class of $C^*$-algebras. A $C^*$-algebra $A$ is locally in $\mathcal{C}$ if, for any finite subset $X$ of $A$ and any $\varepsilon>0$, there is a $C^*$-subalgebra $C$ of $A$ that is in $\mathcal{C}$, and such that $x \in_\varepsilon C$ for all $x \in X$.
\end{definition}

%The following definitions are Definition 1.1 and Definition 1.3 in \cite{WY} (see also \cite{JW}).
\begin{definition} {\rm(}\cite[Definition 1.1]{WY}, \cite{JW}{\rm)}\rm \label{def ck}
Let $A$ be a unital $C^*$-algebra, and let $\mathcal{C}$ be a class of unital $C^*$-algebras. Then $A$ decomposes over $\mathcal{C}$ if, for every finite subset $X$ of the unit ball of $A$ and every $\varepsilon>0$, there exist $C^*$-subalgebras $C, D$, and $E$ of $A$ that are in the class $\mathcal{C}$ and contain $1_A$, and a positive contraction $h \in E$ such that

(i) $\|[h, x]\|<\varepsilon$ for all $x \in X$;

(ii) $h x \in_\varepsilon C,\left(1_A-h\right) x \in_\varepsilon D$, and $h\left(1_A-h\right) x \in_\varepsilon E$ for all $x \in X$;

(iii) for all $e$ in the unit ball of $E, e \in_\varepsilon C$ and $e \in_\varepsilon D$.
\end{definition}

One can think of $C$ and $D$ as being approximately (unitizations of) ideals
in $A$ such that $C+D=A$, $E$ being approximately equal to (the
unitization of) $C\cap D$,  and the pair $\{h,1_A-h\}$ as a partition of unity.

\begin{definition}{\rm(}\cite[Definition 2.3]{JW}{\rm)} \rm  Let $\alpha$ be an ordinal number.

(i) If $\alpha=0$, let $\mathcal{D}_0$ be the class of unital $C^*$-algebras that are locally finite-dimensional.

(ii) If $\alpha>0$, let $\mathcal{D}_\alpha$ be the class of unital $C^*$-algebras that decompose over $C^*$-algebras in $\bigcup_{\beta<\alpha} \mathcal{D}_\beta$.

A unital $C^*$-algebra has \emph{finite complexity} if it is in $\mathcal{D}_\alpha$ for some $\alpha$, in which case its \emph{complexity rank} is the smallest possible $\alpha$. We say a unital $C^*$-algebra has complexity rank zero if and only if it is locally finite-dimensional.

\end{definition}
\begin{remark}
  The class of $C^*$-algebras $\mathcal{D}_\alpha$ is closed under taking finite direct sums, quotients and inductive limits with unital connecting maps. Any unital $C^*$-algebra that is locally in $\mathcal{D}_\alpha$
is in $\mathcal{D}_\alpha$; see  \cite[Lemmas 2.5--2.7]{JW}.
\end{remark}

%We use the finite-spectrum characterization of real rank zero.
%\begin{theorem}[\cite{BP}]\label{thm:BP}
%For a unital $C^*$-algebra $A$,  $A$ has real rank zero iff the self-adjoint elements of $A$ with finite spectrum are norm dense in $A_{sa}$.
%\end{theorem}

The following strengthened formulation is used throughout.
%It is the finite-dimensional formulation of
%\cite[Proposition~2.14]{JW}.
\begin{proposition}\label{prop:strong-rank-one} {\rm (}\cite[Proposition~2.16]{JW}{\rm )}
A unital $C^*$-algebra $A$ has complexity rank at most one if and only if it has the following property.

For any finite subset $X$ of the unit ball of $A$ and any $\varepsilon>0$, there are unital finite-dimensional $C^*$-subalgebras
$C,D,E\subseteq A$ that contain the unit $1_A$
and a positive contraction $h\in E$ such that

{\rm (i)} $\norm{[h,x]}<\varepsilon$ for all $x\in X$;

{\rm (ii)} $hx\in_\varepsilon C,\,\,(1_A-h)x\in_{\varepsilon} D,\,\, h(1_A-h)x\in_{\varepsilon}E$ for all $x \in X$;

{\rm (iii)} $E\subseteq C\cap D$.
\end{proposition}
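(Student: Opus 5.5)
We prove the two implications separately. The backward implication is immediate. The forward implication replaces the locally finite-dimensional algebras from Definition~\ref{def ck} by finite-dimensional ones and then uses small unitary perturbations to make the inclusion $E\subseteq C\cap D$ exact.

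\emph{Backward implication.} Suppose $A$ has the stated property. Finite-dimensional unital $C^*$-algebras are locally finite-dimensional, so $C,D,E$ lie in $\mathcal{D}_0$. Condition (iii) of Definition~\ref{def ck} holds trivially: every $e$ in the unit ball of $E\subseteq C\cap D$ lies exactly in $C$ and in $D$. Hence $A$ decomposes over $\mathcal{D}_0$, i.e.\ $A\in\mathcal{D}_1$.

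\emph{Forward implication.} Fix $X$ and $\varepsilon$, and let $\delta>0$ be small, to be fixed in terms of $\varepsilon$ at the end. Apply Definition~\ref{def ck} with tolerance $\delta$ to get locally finite-dimensional $C_0,D_0,E_0$ and a positive contraction $h_0\in E_0$.

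\emph{Step 1: shrink $E_0$.} Since $E_0$ is locally finite-dimensional, choose a finite-dimensional unital subalgebra $E_1\subseteq E_0$ that $\delta$-approximates $h_0$ and, for each $x\in X$, an element of $E_0$ within $\delta$ of $h_0(1-h_0)x$. Take the approximant of $h_0$, pass to its real part, and apply continuous functional calculus with $t\mapsto\min(\max(t,0),1)$. This gives a positive contraction $h_1\in E_1$ with $\|h_1-h_0\|\le 2\delta$. Conditions (i) and (ii) then persist for $h_1$ with errors $O(\delta)$.

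\emph{Step 2: make $E_1$ sit inside $C$.} The unit ball of $E_1$ is compact, so fix a finite $\delta$-net in it. Each net point is within $\delta$ of $C_0$, so the unit ball of $E_1$ is $2\delta$-contained in $C_0$. Choose a finite-dimensional unital $C_1\subseteq C_0$ that approximates these finitely many near-points together with near-points of $h_1x$, $x\in X$. Then the unit ball of $E_1$ is $3\delta$-contained in $C_1$. Now invoke the standard Christensen-type perturbation lemma for near inclusions of finite-dimensional algebras. Its constant is independent of dimension, and this is exactly why we reduce to a finite net first. It yields a unitary $u\in A$ with $\|u-1\|\le\eta(\delta)$, where $\eta(\delta)\to0$ as $\delta\to0$, such that $uE_1u^*\subseteq C_1$. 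Set $E=uE_1u^*$ and $h=uh_1u^*$.

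\emph{Step 3: make $E$ sit inside $D$.} Since $u\approx1$, the unit ball of $E$ is still $O(\delta+\eta)$-contained in $D_0$. Repeat the argument of Step 2 to obtain a finite-dimensional unital $D_1\subseteq D_0$ that approximates $(1-h_1)x$ for $x\in X$, together with a unitary $v\approx 1$ such that $vEv^*\subseteq D_1$. Put $D=v^*D_1v$, so that $E\subseteq D$, and put $C=C_1$.

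\emph{Checking the conditions.} $C$, $D$ and $E$ are finite-dimensional and contain $1_A$, and $E\subseteq C\cap D$ holds exactly. The element $h\in E$ is a positive contraction. Conditions (i) and (ii) hold with error $O(\delta+\eta(\delta))$, because $u,v$ are close to $1$ and conjugation by them moves each relevant element by at most $2\|u-1\|$ or $2\|v-1\|$. Choosing $\delta$ small enough makes all these errors less than $\varepsilon$.

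The main obstacle is the perturbation step, i.e.\ getting a unitary close to $1$, with a dimension-free estimate, that conjugates a near-included finite-dimensional algebra into $C_1$ (respectively $D_1$). I would quote Christensen's theorem on near inclusions for this. Step 2 is arranged so that the relevant near inclusion is into a finite-dimensional, hence injective, algebra, where that theorem applies directly.
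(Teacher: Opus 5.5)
The paper gives no argument for this proposition. It imports it directly from \cite[Proposition~2.16]{JW}, so your proof is a self-contained reconstruction rather than a variant of something in the text, and as such it is correct.

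The backward direction is exactly as you say. In the forward direction the bookkeeping works:
\begin{enumerate}
\item shrink $E_0$ to a finite-dimensional unital $E_1$ and truncate to get $h_1$;
\item use compactness of the unit ball of $E_1$ to find a single finite-dimensional $C_1\subseteq C_0$ (resp.\ $D_1\subseteq D_0$) that near-contains it;
\item conjugate $E_1$ into $C_1$ by a unitary $u$ close to $1$;
\item conjugate $D_1$ (rather than $E$) by $v^*$, so that $E\subseteq C$ is not disturbed.
\end{enumerate}
All errors in (i)--(ii), and in $h(1-h)x\in E$, tend to $0$ with $\delta$. What your route buys over the bare citation is transparency. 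It shows that the only nontrivial input behind the exact inclusion $E\subseteq C\cap D$ is Christensen's perturbation theorem for near inclusions of finite-dimensional $C^*$-algebras, with dimension-free constants. The paper stresses that this exact inclusion is what Lemmas~\ref{lem:correction} and~\ref{lem:three-corner} need.

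A few points to tidy.

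\emph{The rank-zero case.} Complexity rank at most one means $A\in\mathcal{D}_0\cup\mathcal{D}_1$. Before invoking Definition~\ref{def ck} you should note that $\mathcal{D}_0\subseteq\mathcal{D}_1$; take $C=D=E=A$ and $h=1$.

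\emph{The justifications in Step~2.} Your stated reasons there are misattributed.
\begin{itemize}
\item Christensen's theorem needs the algebra being \emph{moved} ($E_1$, resp.\ $E$) to be finite-dimensional; the target may be arbitrary.
\item The dimension-free constant is essential because $\dim E_1$ is only determined after $\delta$ is fixed, so a dimension-dependent $\eta$ would be useless.
\item The finite $\delta$-net has nothing to do with dimension-independence. It is needed because $C_0$ is only locally finite-dimensional, and compactness is what lets one finite-dimensional $C_1$ near-contain the whole unit ball of $E_1$.
\item $C_1$ must be finite-dimensional not for Christensen's theorem, but because the final $C$ must be finite-dimensional and contain $E$ exactly. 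Local finite-dimensionality of $C_0$ does not supply a finite-dimensional subalgebra that exactly contains $uE_1u^*$, which is why shrinking before conjugating is the right order.
\end{itemize}
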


The exact inclusions in part~\textup{(iii)} are essential.  The
original definition is expressed in terms of approximate inclusions
and locally finite-dimensional pieces.  The passage to the form above
is  a substantial reduction that allows the corner constructions in Lemmas~\ref{lem:correction} and~\ref{lem:three-corner}.

%For a full matrix algebra, this is Lin's theorem~\cite{Lin}; Friis and
%R{\o}rdam give a short proof in~\cite{FR}.  For a finite direct sum,
%the assertion follows by applying the matrix theorem to each summand
%with the same constant.  The argument below does not require the
%approximating elements $x'$ and $y'$ to be contractions.

\section{A Sylvester estimate}

The following lemma records the precise spectral
statement.

\begin{lemma}
\label{lem:spectral-exponential}
Let $B$ be a unital $C^*$-algebra, let $x=x^*\in B$, and let
$m,M\in\mathbb R$.  For every $t\geq0$, the following statements hold.

{\rm (i)} If $x\leq M1_B$, then
$
 \norm{\exp_B(tx)}
 =\exp\bigl(t\max\sigma_B(x)\bigr)
 \leq e^{tM}.
$

{\rm (ii)} If $x\geq m1_B$, then $ \norm{\exp_B(-tx)}
 =\exp\bigl(-t\min\sigma_B(x)\bigr)
 \leq e^{-tm}.$
\end{lemma}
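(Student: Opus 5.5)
The plan is to use continuous functional calculus in the unital commutative $C^*$-subalgebra $C^*(x,1_B)\cong C(\sigma_B(x))$, together with the spectral mapping theorem and the fact that the norm of a normal element equals its spectral radius. First, I will identify $\exp_B(tx)$, defined by the norm-convergent power series $\sum_{k\ge0} t^k x^k/k!$, with the functional calculus element $f_t(x)$, where $f_t(s)=e^{ts}$. The partial sums are polynomials in $x$ and converge uniformly to $f_t$ on the compact set $\sigma_B(x)\subseteq\mathbb R$. Since the functional calculus map $C(\sigma_B(x))\to B$ is isometric, these partial sums converge in norm to $f_t(x)$. So the power series and functional calculus definitions agree.

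Next, $\exp_B(tx)=f_t(x)$ is self-adjoint, in fact positive, because $f_t$ is real-valued and positive. Its norm is therefore its spectral radius. By the spectral mapping theorem, $\sigma_B(f_t(x))=f_t(\sigma_B(x))$, so
\[
\norm{\exp_B(tx)}=\sup_{s\in\sigma_B(x)} e^{ts}=e^{t\max\sigma_B(x)}.
\]
The second equality uses that $s\mapsto e^{ts}$ is nondecreasing when $t\ge0$, and that $\sigma_B(x)$ is a nonempty compact subset of $\mathbb R$, so its maximum exists.

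For the inequality in (i), note that $x\le M1_B$ means $M1_B-x\ge0$. Hence $\sigma_B(M1_B-x)=M-\sigma_B(x)\subseteq[0,\infty)$, which gives $\max\sigma_B(x)\le M$. Monotonicity of $s\mapsto e^{ts}$ for $t\ge0$ then yields $\norm{\exp_B(tx)}\le e^{tM}$.

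Part (ii) follows by applying (i) to $-x$ with bound $-m$. The hypothesis $x\ge m1_B$ is equivalent to $-x\le -m1_B$, and $\max\sigma_B(-x)=-\min\sigma_B(x)$, which gives both the identity and the bound $e^{-tm}$.

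I expect no real obstacle here. The only point requiring care is that the spectrum is taken in $B$, and that the exponential computed in $B$ agrees with the functional calculus. This is handled by the isometric identification $C^*(x,1_B)\cong C(\sigma_B(x))$ and spectral permanence, which guarantees that $\sigma$ computed in the subalgebra equals $\sigma_B$.
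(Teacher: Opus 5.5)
Your proof is correct and follows essentially the same route as the paper. Positivity of $M1_B-x$ gives $\max\sigma_B(x)\le M$ through the spectral mapping theorem, and the spectral mapping theorem for $s\mapsto e^{ts}$ shows that $\exp_B(tx)$ is positive with norm $\exp\bigl(t\max\sigma_B(x)\bigr)$; the only differences are that you also check the power-series exponential agrees with the functional-calculus one, and that you deduce (ii) by applying (i) to $-x$ instead of repeating the argument.
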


\begin{proof}
(i)  Since $x\leq M1_B$, the element
$M1_B-x$ is positive. By the spectral mapping theorem,
$$
       \sigma_B(M1_B-x)
       =\{M-s:s\in\sigma_B(x)\}\subseteq[0,\infty).
$$
It follows that every $s\in\sigma_B(x)$ satisfies $s\leq M$.
Equivalently,
$$
       \max\sigma_B(x)\leq M.
$$
By the spectral mapping theorem, we have
$$
       \sigma_B(\exp_B(tx))
       =\{e^{ts}:s\in\sigma_B(x)\}.
$$
Thus, $\exp_B(tx)$ is a positive element, and  we obtain
$$
\begin{aligned}
 \norm{\exp_B(tx)}
 &=\max\{e^{ts}:s\in\sigma_B(x)\}\\
 &=\exp\bigl(t\max\sigma_B(x)\bigr)
 \leq e^{tM}.
\end{aligned}
$$

(ii) If $x\geq m1_B$, then $x-m1_B$ is positive.  Similarly, we have
$$
       \min\sigma_B(x)\geq m.
$$
Applying the spectral mapping theorem again, we have
$$
\begin{aligned}
 \norm{\exp_B(-tx)}
 &=\max\{e^{-ts}:s\in\sigma_B(x)\}\\
 &=\exp\bigl(-t\min\sigma_B(x)\bigr)
 \leq e^{-tm}.
\end{aligned}
$$
%which proves part~\textup{(ii)}.

%For the corner statement, $pAp$ is a unital $C^*$-algebra with unit
%$p$.  Positivity in $pAp$ agrees with the order inherited from $A$.
%One direction is immediate.  Conversely, suppose that $y\in pAp$ is
%positive in $A$.  Then $p$ commutes with $y$ and, by continuous
%functional calculus, with $y^{1/2}$.  Since $(1-p)y=0$ and the
%square-root function vanishes at zero, functional calculus gives
%$(1-p)y^{1/2}=0$.  Hence $y^{1/2}=py^{1/2}p\in pAp$, so $y$ is
%positive in $pAp$.  Thus an inequality such as $x\leq Mp$ has the
%same meaning in either algebra.  Applying parts~\textup{(i)} and
%\textup{(ii)} with $B=pAp$ proves the corner estimates.  No
%positivity assumption on $x$ is used.
\end{proof}
\begin{remark}\label{rmk:ineq}
In particular, if $p$ is a nonzero projection in a $C^*$-algebra
$A$, then the same conclusions hold in the unital corner $pAp$, with
$p$ in place of $1_B$.  Thus, for $x=x^*\in pAp$,
$$
 x\leq Mp
 \quad\Longrightarrow\quad
 \norm{\exp_{pAp}(tx)}\leq e^{tM}
$$
and
$$
 x\geq mp
 \quad\Longrightarrow\quad
 \norm{\exp_{pAp}(-tx)}\leq e^{-tm}.
$$
The order inequalities may be interpreted either in $pAp$ or in $A$. The subscripts specify the unital algebra in which the exponential and the spectrum are computed.

\end{remark}
%\begin{remark}\label{rem:exponential-order}
%Lemma~\ref{lem:spectral-exponential} does not use an order-preserving
%property of the exponential map on arbitrary self-adjoint elements;
%such a property is false in general.  The order hypothesis is used
%only to place the spectrum of one self-adjoint element below or above
%a scalar bound.  The norm estimate then follows from scalar
%monotonicity on that spectrum.
%\end{remark}

%We next estimate a corner joining two spectrally separated regions.
%For clarity, we first state some basic facts (\ref{def: bochner}--\ref{thm: Hille}) on the Bochner integral which is the natural generalization of the familiar Lebesgue integral to the vector-valued setting %and specify the meaning of the resulting improper integral
%(see \cite[Chapter~II]{DU} and \cite[Chapter 1]{HNVW}).

We shall need a few basic facts about the Bochner integral---the vector-valued analogue of the Lebesgue integral (see %\cite[Chapter~II]{DU} and 
\cite[Chapter 1]{HNVW})---which we record here as \ref{def: bochner}--\ref{prop:com}.

\begin{definition}\rm \label{def: bochner}
Let $\mathcal{E}$ be a Banach space and let $(\Omega, \Sigma, \mu)$ be a  measure space. A function $f:\Omega\to \mathcal{E}$ is called  \emph{$\mu$-simple} if there exist $x_1,x_2,\cdots,x_n\in \mathcal{E}$  and $\Sigma_1,\Sigma_2,\cdots,\Sigma_n\in \Sigma$ satisfying $\mu(\Sigma_i)<\infty$, $i=1,2,\cdots,n$, such that $f=\sum_{i=1}^{n}x_i\chi_{_{\Sigma_i}}$, where $\chi_{_{\Sigma_i}}(\omega)=1$ if $\omega\in \Sigma_i$ and $\chi_{_{\Sigma_i}}(\omega)=0$ if $\omega\notin \Sigma_i$.
%For a $\mu$-simple function 
%$f=\sum_{i=1}^{n}x_i\chi_{_{\Sigma_i}}$, 
Then we define
$$
\int_\Omega f \mathrm{~d} \mu:=\sum_{i=1}^n \mu\left(\Sigma_i\right) x_i.
$$

We say that a property holds \emph{$\mu$-almost everywhere} if there exists a $\mu$-null set $N\subset \Omega$
 such that the property holds on the complement of $N$.
 A function $f:\Omega\to \mathcal{E}$ is called \emph{$\mu$-measurable {\rm (}strongly measurable{\rm )}} if there exists a sequence of $\mu$-simple functions $(f_n)$ with $\lim_{n}\|f_n-f\|=0$ $\mu$-almost everywhere.
\end{definition}
\begin{definition}\rm
  A function $f:\Omega\to \mathcal{E}$ is $\mu$-Bochner integrable if there exists a sequence of $\mu$-simple functions $f_n:\Omega\to \mathcal{E}$ such that
%  {\rm (i)} $\lim_nf_n=f$ $\mu$-almost everywhere;
%
%  {\rm (ii)} 
$$\lim_{n\to \infty}{\int}_{\Omega}\|f_n-f\|d\mu=0.$$
Define the Bochner integral of $f$ with respect to $\mu$ by
$$
\int_\Omega f \mathrm{~d} \mu:=\lim _{n \rightarrow \infty} \int_\Omega f_n \mathrm{~d} \mu.
$$
\end{definition}
\begin{proposition}\label{prop:com}
  A strongly measurable function $f:\Omega\to \mathcal{E}$ is $\mu$-Bochner integrable if and only if
  $$
  \int_{\Omega}\|f\|d\mu<+\infty,
  $$
  and in this case we have
  $$
 \norm{\int_\Omega f\,d\mu}
 \leq \int_\Omega\norm{f}\,d\mu.
$$
\end{proposition}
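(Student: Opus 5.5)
The plan is to prove the two directions of Proposition~\ref{prop:com} separately, passing through the approximating simple functions and reducing everything to facts about scalar integrals of $\mu$-measurable functions.

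\textbf{Integrable $\Rightarrow$ finite norm integral.} Suppose $f$ is $\mu$-Bochner integrable, and let $(f_n)$ be simple functions with $\int_\Omega\norm{f_n-f}\,d\mu\to0$.

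1. The function $\norm{f}$ is measurable, being an a.e.\ limit of the measurable scalar functions $\norm{g_n}$, where $(g_n)$ is any sequence of simple functions converging to $f$ almost everywhere.
2. Fix $n$ with $\int\norm{f_n-f}\,d\mu<\infty$. Then
$$
\int\norm{f}\,d\mu\le\int\norm{f_n}\,d\mu+\int\norm{f-f_n}\,d\mu<\infty,
$$
since a simple function has finite norm integral.
3. For the norm inequality, check it first on simple functions. Write $f_n$ with disjoint sets $\Sigma_i$, which is always possible; then the triangle inequality gives $\norm{\int f_n}\le\sum\mu(\Sigma_i)\norm{x_i}=\int\norm{f_n}$.
4. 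Pass to the limit:
$$
\Bigl|\int\norm{f_n}-\int\norm{f}\Bigr|\le\int\norm{f_n-f}\to0,
$$
and $\int f_n\to\int f$ by definition. So the inequality persists for $f$.
5. This step also uses that $\int f$ is well defined. The sequence $(\int f_n)$ is Cauchy by the simple-function inequality applied to $f_n-f_m$, and two approximating sequences give the same limit.

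\textbf{Finite norm integral $\Rightarrow$ integrable.} Suppose $f$ is strongly measurable and $\int\norm{f}<\infty$.

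1. Take simple $g_n\to f$ almost everywhere.
2. Truncate to obtain domination:
$$
f_n=g_n\,\chi_{\{\norm{g_n}\le 2\norm{f}\}}.
$$
Then $f_n$ is still simple, and $\norm{f_n}\le2\norm{f}$.
3. Check that $f_n\to f$ almost everywhere. Where $f(\omega)\neq0$, eventually $\norm{g_n(\omega)}\le2\norm{f(\omega)}$, so $f_n(\omega)=g_n(\omega)$ for large $n$. Where $f(\omega)=0$, we have $\norm{f_n(\omega)}\le 0$, so $f_n(\omega)=0$.
4. The $f_n$ have finite-measure support. The support of $g_n$ may have infinite measure, but on it $\norm{g_n}$ is bounded below by a positive constant $c$. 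Intersecting with $\{\norm{f}\ge c/2\}$, which has finite measure by Chebyshev since $\int\norm{f}<\infty$, keeps the measure finite.
5. Now $\norm{f_n-f}\le3\norm{f}$, which is integrable, and $\norm{f_n-f}\to0$ almost everywhere. Dominated convergence gives $\int\norm{f_n-f}\to0$, so $f$ is Bochner integrable.

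\textbf{Main obstacle.} The delicate point is the truncation in the converse direction. The modified functions must remain simple with finite-measure supports and must be dominated; steps 2 and 4 above handle these two requirements. Everything else is routine scalar measure theory.
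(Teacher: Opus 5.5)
Your proof is correct. The paper gives no proof of its own and simply cites \cite{HNVW}, and your argument is essentially the standard one found there: the norm inequality is checked on simple functions and passed to the limit, and the converse truncates the approximants to $g_n\chi_{\{\norm{g_n}\le 2\norm{f}\}}$ and applies dominated convergence. Your Step 4 of the converse is redundant under the paper's definition, since its $\mu$-simple functions already have finite-measure supports, but it is harmless and makes the argument work for the weaker notion of simple function as well.
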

Let $\mathcal{E}^*$ be the Banach space dual of $\mathcal{E}$, which is a Banach space equipped with the norm
$$
\|\varphi\|_{\mathcal{E}^*}=\sup_{\|x\|\leq 1}|\varphi(x)|.
$$
%The following is a special case of Hille.
It is immediate from the definition of the Bochner integral that
%\begin{theorem}\label{thm: Hille} Let $\varphi\in \mathcal{E}^*$.  %be a closed linear operator defined inside $\mathcal{E}$ and %having values in a Banach space $\mathcal{F}$. If $f$ and $Tf$ %are Bochner integrable with respect to $\mu$, then
%$$
%T\left( \int_{\Omega}fd\mu\right)=\int_{\Omega}Tfd\mu.
%$$
if $f$ is Bochner integrable respect to $\mu$, then $\varphi\circ f$ is Bochner integrable, and 
$$
\varphi\left( \int_{\Omega}fd\mu\right)=\int_{\Omega}\varphi\circ fd\mu.
$$
%\begin{corollary} \label{cor: sep}
%  Let $f$ and $g$ be strong measurable. If for each $\varphi\in \mathcal{E}^*$, $\varphi(f)=\varphi(g)$ $\mu$-almost everywhere, then $f=g$ $\mu$-almost everywhere.
%\end{corollary}
We shall also need the following lemma which is a direct consequence of the above facts.
\begin{lemma} \label{lem:bochner-facts}
Let $B$ be a Banach space.

{\rm (i)} If $g\colon[0,\infty)\to B$ is norm-continuous and
$$
             \int_0^\infty\norm{g(t)}\,dt<\infty,
$$
then $g$ is Bochner integrable and
$$
 \norm{\int_0^\infty g(t)\,dt}
 \leq \int_0^\infty\norm{g(t)}\,dt.
$$
%Equivalently, the truncated integrals converge in norm as their upper
%endpoints tend to infinity.

{\rm (ii)}  If $G\colon[0,r]\to B$ is continuously differentiable in norm,
then
$$
             G(r)-G(0)=\int_0^r G'(t)\,dt.
$$
%where the integral is the Bochner integral.
\end{lemma}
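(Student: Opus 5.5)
Both parts follow from the Bochner integral facts recorded just above (Proposition~\ref{prop:com} and the duality identity), so the plan is short. For part~(i), norm-continuity of $g$ gives strong measurability. Concretely, on each interval $[0,n]$ I will approximate $g$ uniformly by step functions built on a fine partition, using uniform continuity on compacts, and then patch these into simple functions that converge to $g$ pointwise everywhere on $[0,\infty)$. The map $t\mapsto\norm{g(t)}$ is continuous, hence measurable, and its integral is finite by hypothesis. Proposition~\ref{prop:com} then yields Bochner integrability together with the norm inequality. This step is routine.

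For part~(ii), I will use a scalarization argument. First, $G'$ is continuous on the compact interval $[0,r]$, so it is bounded and strongly measurable, and hence Bochner integrable by part~(i) or Proposition~\ref{prop:com}. Set
$$
 v:=G(r)-G(0)-\int_0^r G'(t)\,dt\in B .
$$
Fix $\varphi\in B^*$. Then $\varphi\circ G$ is a continuously differentiable scalar function with $(\varphi\circ G)'=\varphi\circ G'$, because $\varphi$ is bounded and linear. The scalar fundamental theorem of calculus, applied to the real and imaginary parts, gives
$$
 \varphi(G(r))-\varphi(G(0))=\int_0^r\varphi(G'(t))\,dt .
$$
The duality identity lets $\varphi$ pass through the Bochner integral, so the right-hand side equals $\varphi\bigl(\int_0^r G'\,dt\bigr)$. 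Hence $\varphi(v)=0$ for every $\varphi\in B^*$. By the Hahn--Banach theorem, $v=0$.

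The only mildly delicate point is the strong measurability in part~(i) on the unbounded interval $[0,\infty)$, where $\mu(\Sigma_i)<\infty$ is required of the simple functions. I handle this by truncation: take simple functions supported on $[0,n]$ and approximating $g$ within $1/n$ there. This sequence converges to $g$ pointwise everywhere, which is enough for Proposition~\ref{prop:com}.
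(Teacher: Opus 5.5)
Your proposal is correct and follows the same route as the paper. For (i) you derive strong measurability from norm-continuity and then apply Proposition~\ref{prop:com}; for (ii) you apply the scalar fundamental theorem of calculus to $\varphi\circ G$ for each $\varphi\in B^*$, move $\varphi$ through the Bochner integral, and conclude with Hahn--Banach. Your truncated step-function approximation on $[0,n]$ and your check that $G'$ is Bochner integrable on $[0,r]$ make explicit two details the paper leaves implicit.
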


\begin{proof}
%Norm-continuity implies strong measurability.  The Bochner
%integrability criterion therefore shows that the integrability of
%$t\mapsto\norm{g(t)}$ implies the Bochner integrability of $g$; see
%\cite[Proposition~1.16]{vN} and \cite[Chapter~II]{DU}.
\iffalse

We include a proof of the norm inequality.  Let
$(\Omega,\Sigma,\nu)$ be a measure space.  If
$$
       s=\sum_{j=1}^n \mathbf 1_{E_j}x_j
$$
is an integrable $B$-valued simple function, we may omit zero
coefficients and assume that the sets $E_j$ are pairwise disjoint.
Integrability then implies $\nu(E_j)<\infty$ for every $j$.  Hence
$$
\begin{aligned}
 \norm{\int s\,d\nu}
 &=\norm{\sum_{j=1}^n\nu(E_j)x_j}\\
 &\leq\sum_{j=1}^n\nu(E_j)\norm{x_j}
 =\int\norm{s}\,d\nu.
\end{aligned}
$$
If $f$ is Bochner integrable, there are integrable simple functions
$s_n$ such that
$$
             \int\norm{s_n-f}\,d\nu\longrightarrow 0.
$$
By the definition of the Bochner integral,
$\int s_n\,d\nu\to\int f\,d\nu$ in norm.  Moreover,
$$
 \left|\int\norm{s_n}\,d\nu-\int\norm{f}\,d\nu\right|
 \leq\int\left|\norm{s_n}-\norm{f}\right|\,d\nu
 \leq\int\norm{s_n-f}\,d\nu\longrightarrow0.
$$
Passing to the limit in the simple-function inequality proves the first inequality.

For $0\leq a<b$, the same inequality gives
$$
 \norm{\int_0^bg(t)\,dt-\int_0^ag(t)\,dt}
 \leq\int_a^b\norm{g(t)}\,dt.
$$
Since the scalar tail on the right tends to zero, the truncated
integrals form a norm-Cauchy net.  This proves the stated
improper-integral interpretation.
\fi
Under the assumptions of (i), standard calculus shows that norm-continuity implies strong measurability.  The Bochner integrability criterion therefore follows from Proposition \ref{prop:com}.

Now we  prove \textup{(ii)}. For $\varphi\in B^*$, %Theorem \ref{thm: Hille}  implies
%$$
% \varphi\!\left(\int_0^r u(t)\,dt\right)
% =\int_0^r\varphi(u(t))\,dt.
%$$
%The identity follows first for simple functions and then for general
%Bochner-integrable functions by approximation.
applying the fundamental theorem of calculus to $\varphi\circ G$, we have
\begin{align*}
  \varphi\circ G(r)-\varphi \circ G(0) & =\int_0^r (\varphi\circ G)'(t)\,dt \\
   & = \int_0^r \varphi\circ G'(t)\,dt.
\end{align*}
Now we have
$$
 \varphi\!\left(G(r)-G(0)-\int_0^r G'(t)\,dt\right)=0,
 \quad \forall\,\varphi\in B^*.
$$
Then the Hahn--Banach theorem implies (ii).
%that $B^*$ separates the points of $B$, so (ii) follows.
\end{proof}
The following is the ordered self-adjoint case of the Sylvester--Rosenblum operator equation.  The spectral-separation method originates in \cite{Rosenblum}. % (see also \cite{BR}).  %We give the integral argument in the form required below.
\begin{lemma}\label{lem:sylvester}
Let $p,q$ be orthogonal projections in a unital $C^*$-algebra $A$ and let
$r=r^*$ commute with both $p$ and $q$.  Suppose that
$$
       prp\leq \lambda p,
       \quad
       qrq\geq \mu q
$$
for real numbers $\lambda<\mu$.  Then, for every $z\in A$,
$$
       \norm{pzq}\leq\frac{1}{\mu-\lambda}
                       \norm{p[r,z]q}.
$$
\end{lemma}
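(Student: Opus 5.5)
We use the Rosenblum integral representation of the Sylvester equation, working in the corners $pAp$ and $qAq$. Set $a=prp\in pAp$, $b=qrq\in qAq$, $y=pzq$ and $w=p[r,z]q$. Since $r$ commutes with $p$ and $q$, we have $pr=rp=prp=a$ and $rq=qr=b$. Hence
$$
 w=p(rz-zr)q=a\,y-y\,b .
$$
Thus $y$ solves the Sylvester equation $ay-yb=w$. The spectrum of $a$ (computed in $pAp$) lies in $(-\infty,\lambda]$ and that of $b$ (in $qAq$) lies in $[\mu,\infty)$. If $p=0$ or $q=0$ there is nothing to prove, so we assume both are nonzero.

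Next we define $F(t)=\exp_{pAp}(ta)\,y\,\exp_{qAq}(-tb)$ for $t\ge0$. Each factor is a norm-differentiable power series in $t$, so $F$ is continuously differentiable, and the product rule gives
$$
 F'(t)=\exp_{pAp}(ta)\,(ay-yb)\,\exp_{qAq}(-tb)=\exp_{pAp}(ta)\,w\,\exp_{qAq}(-tb).
$$
Here we use $ay=a\,py$, so that multiplying by the corner units is harmless. By Remark~\ref{rmk:ineq}, applied with $M=\lambda$ in $pAp$ and $m=\mu$ in $qAq$,
$$
 \norm{F(t)}\le e^{-t(\mu-\lambda)}\norm{y},
 \qquad
 \norm{F'(t)}\le e^{-t(\mu-\lambda)}\norm{w}.
$$
Both bounds decay exponentially because $\mu-\lambda>0$.

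Now we apply Lemma~\ref{lem:bochner-facts}(ii) on $[0,r_0]$, which gives $F(r_0)-y=\int_0^{r_0}F'(t)\,dt$. We let $r_0\to\infty$. Then $F(r_0)\to0$ in norm, and by Lemma~\ref{lem:bochner-facts}(i) the integrals converge to $\int_0^\infty F'(t)\,dt$, because the tail is bounded by $\int_{r_0}^\infty e^{-t(\mu-\lambda)}\norm{w}\,dt\to0$. This yields the representation
$$
 y=-\int_0^\infty F'(t)\,dt .
$$
Taking norms and using the bound on $\norm{F'(t)}$ gives
$$
 \norm{y}\le\norm{w}\int_0^\infty e^{-t(\mu-\lambda)}\,dt=\frac{\norm{w}}{\mu-\lambda},
$$
which is the claimed inequality.

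The only delicate point is the bookkeeping of units. The exponentials live in different corners, with units $p$ and $q$ respectively. We need $F(0)=pyq=y$, and we need the derivative computation to be valid inside $A$. This works because $\exp_{pAp}(ta)$ is a power series in $a$ with constant term $p$, and $p\,y=y=y\,q$. The rest of the argument is routine.
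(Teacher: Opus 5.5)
Your proposal is correct and is essentially the paper's proof: the same corner exponentials $\exp_{pAp}(t\,prp)$ and $\exp_{qAq}(-t\,qrq)$ and the same $F(t)$ with derivative $\exp_{pAp}(ta)\,w\,\exp_{qAq}(-tb)$. You also use the same exponential-decay bounds from Lemma~\ref{lem:spectral-exponential}, and the same limit of $F(r_0)-y=\int_0^{r_0}F'(t)\,dt$ via Lemma~\ref{lem:bochner-facts}, to get $\norm{pzq}\leq\norm{p[r,z]q}/(\mu-\lambda)$. Renaming the interval endpoint $r_0$ is a small improvement, since the paper reuses the letter $r$.
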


\begin{proof}
If $p=0$ or $q=0$, the inequality holds trivially.
Assume that $p$ and $q$ are nonzero, %and work in the unital corner algebras $pAp$ and $qAq$, whose units are $p$ and $q$,
%respectively.
and set
$$
       R_p=prp\in(pAp)_{sa},\quad
       R_q=qrq\in(qAq)_{sa},
$$$$
       x=pzq,\quad y=p[r,z]q.
$$
Since $r$ commutes with $p$ and $q$, one has
\begin{align*}
 R_px-xR_q
 &=prp\,pzq-pzq\,qrq\\
 &=przq-pzrq\\
 &=p(rz-zr)q
 =y.
\end{align*}

For $t\geq0$, let
$$
 U(t)=\exp_{pAp}(tR_p)\in pAp,
 \quad
 V(t)=\exp_{qAq}(-tR_q)\in qAq.
$$
%where the subscripts indicate that the exponentials are formed in the
%corner algebras.  This distinction is essential: the exponential of
%$-tR_q$ formed in $A$ contains the additional summand $1-q$ and
%therefore need not decay.
Applying Lemma~\ref{lem:spectral-exponential} in $pAp$ to the inequality
$R_p\leq\lambda p$, and in $qAq$ to $R_q\geq\mu q$, we obtain %The order
%inequalities first give the scalar spectral bounds
$$
 \max\sigma_{pAp}(R_p)\leq\lambda,
 \quad
 \min\sigma_{qAq}(R_q)\geq\mu.
$$
By  Lemma~\ref{lem:spectral-exponential} and Remark \ref{rmk:ineq},
\begin{align*}
 \norm{U(t)}
 &=\exp\!\,\bigl(t\max\sigma_{pAp}(R_p)\bigr)
 \leq e^{t\lambda},\\
 \norm{V(t)}
 &=\exp\!\,\bigl(-t\min\sigma_{qAq}(R_q)\bigr)
 \leq e^{-t\mu}.
\end{align*}
%Thus the exponentials are formed from $R_p$ and $R_q$ themselves;
%the spectral maximum and minimum appear only in the corresponding
%norm formulas.
Define $F(t)=U(t)xV(t)$.  Then
$$
       \norm{F(t)}
       \leq e^{-t(\mu-\lambda)}\norm{x},
$$
so $F(t)$ tends to zero in norm.

 Since $U(0)=p$, $V(0)=q$, and
$pxq=x$, we have $F(0)=x$.  Differentiating in the Banach
algebra $A$ yields
\begin{align*}
 F'(t)
 &=U(t)(R_px-xR_q)V(t)\\
 &=U(t)yV(t).
\end{align*}
The map $t\mapsto U(t)yV(t)$ is norm-continuous, and
$$
       \norm{U(t)yV(t)}
       \leq e^{-t(\mu-\lambda)}\norm{y},
$$
where the right side is integrable. Hence,
Lemma~\ref{lem:bochner-facts}\,\textup{(i)} shows that
$t\mapsto U(t)yV(t)$ is Bochner integrable on $[0,\infty)$.
%Equivalently, its improper integral converges in norm.
Applying
Lemma~\ref{lem:bochner-facts}\,\textup{(ii)} to $F(t)$ on $[0,r]$,  we have
$$
       F(r)-x=\int_0^r U(t)yV(t)\,dt.
$$
Taking the limit as $r\to\infty$, since $F(r)\to 0$ in norm, we obtain
$$
       -x=\int_0^\infty U(t)yV(t)\,dt.
$$
By Lemma~\ref{lem:bochner-facts}\,\textup{(i)} and the
exponential estimates, we have
$$
 \norm{x}
 \leq\int_0^\infty\norm{U(t)yV(t)}\,dt
 \leq\int_0^\infty e^{-t(\mu-\lambda)}\norm{y}\,dt
          =\frac{\norm{y}}{\mu-\lambda}.
$$
\end{proof}

\section{The three-corner estimates}

We isolate the perturbation argument that produces an exactly
commuting pair in the overlap algebra.  %The constants are selected for
%the application in Section~5 and are not intended to be optimal.
We need the following theorem of Lin.
\begin{theorem}[\cite{FR, Lin}]\label{thm:lin}
For every $\alpha>0$ there is a number
$\delta_L(\alpha)>0$ such that %for any $n$ and any self-adjoint elements $x,y\in M_n(\mathbb{C})$ satisfying $\|x\|, \|y\|\leq 1$ and  $\norm{[x,y]}<\delta_L(\alpha),$
for any finite-dimensional $C^*$-algebra $E$ and any self-adjoint elements $x,y\in E$ satisfying $\|x\|, \|y\|\leq 1$ and  $\norm{[x,y]}<\delta_L(\alpha),$
there are commuting self-adjoint elements $x',y'\in E$ such that
$$
      \norm{x-x'}<\alpha,
      \quad
      \norm{y-y'}<\alpha.$$
%The constant is independent of the dimensions and of the number of
%matrix summands of $F$.
\end{theorem}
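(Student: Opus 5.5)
The plan is to reduce to the matrix case, where Lin's theorem \cite{Lin} (with the short proof of Friis and R{\o}rdam \cite{FR}) already supplies a universal $\delta$, and then handle a finite-dimensional $E$ one block at a time. By the structure theorem, $E\cong\bigoplus_{k=1}^N M_{n_k}(\mathbb C)$ as $C^*$-algebras. Given $\alpha>0$, I would take $\delta_L(\alpha)$ to be the matrix constant for $\alpha$ (possibly shrunk to $\alpha/2$, so that strict inequalities survive). This constant does not depend on $n$.

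Write $x=\bigoplus_k x_k$ and $y=\bigoplus_k y_k$, where $x_k,y_k\in M_{n_k}(\mathbb C)$ are self-adjoint. Since the norm on a direct sum is the maximum over summands, we have $\|x_k\|,\|y_k\|\le 1$ and $\|[x_k,y_k]\|\le\|[x,y]\|<\delta_L(\alpha)$ for each $k$. Lin's theorem in $M_{n_k}(\mathbb C)$ then gives commuting self-adjoint matrices $x_k',y_k'$ with $\|x_k-x_k'\|<\alpha$ and $\|y_k-y_k'\|<\alpha$.

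Set $x'=\bigoplus_k x_k'$ and $y'=\bigoplus_k y_k'$. These commute summandwise and are self-adjoint. Because there are finitely many summands, the maximum of finitely many numbers each $<\alpha$ is still $<\alpha$, so the strict inequalities hold. Pulling back along the isomorphism, we get $x',y'\in E$ with the required properties. As in the matrix version, no contractivity of $x',y'$ is needed.

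The substantive step is the matrix theorem itself, namely that $\delta$ is uniform in the dimension. I would not reprove it; I would cite \cite{Lin,FR}. If one wanted the version with a non-strict commutator bound, the only care needed is that the constant from \cite{FR} is stated with the correct strict or non-strict inequalities, which can be adjusted by shrinking $\delta$.
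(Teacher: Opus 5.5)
Your proposal is correct and is the same argument the paper relies on. The paper gives no proof of its own: it cites Lin's theorem (with the Friis--R{\o}rdam proof) for full matrix algebras, and the passage to $E\cong\bigoplus_k M_{n_k}(\mathbb{C})$ is exactly your summandwise application with the same dimension-independent constant, using that the direct-sum norm is a maximum over finitely many summands (so your optional shrinking of the constant is harmless but unnecessary).
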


\begin{lemma}\label{lem:correction}
Let $A$ be a unital $C^*$-algebra, let $a\in A_{sa}$ satisfy
$\norm{a}\leq1$, and let $C,D,E\subseteq A$ be unital
finite-dimensional $C^*$-subalgebras which  contain the unit $1_A$.
Suppose that $h\in E$ is a positive contraction and that

{\rm (i)} $\norm{[h,a]}<\delta$;

{\rm (ii)} $ha\in_\delta C,\,\,(1-h)a\in_{\delta} D,\,\, h(1-h)a\in_{\delta}E$;

{\rm (iii)} $E\subseteq C\cap D$.

Let $\alpha>0$, assume $\delta<\min\{\frac{1}{10}, \frac{1}{6}\delta_{L}(\alpha)\}$. Then there are a positive contraction $h_0\in E$ and an element
$e_0\in E_{sa}$ such that $[h_0,e_0]=0$, and with
$k_0=h_0(1-h_0)$,  we have

{\rm (i)$'$} $\norm{[h_0,a]}<\delta+4\alpha$;

{\rm (ii)$'$} $h_0a\in_{\delta+2\alpha} C,\,\,(1-h_0)a\in_{\delta+2\alpha} D$;

{\rm (iii)$'$} $\norm{k_0a-e_0}<\frac52\delta+7\alpha$.
\end{lemma}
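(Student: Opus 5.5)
The plan is to take the element of $E$ that approximates $h(1-h)a$, make it self-adjoint, and apply Theorem~\ref{thm:lin} inside the finite-dimensional algebra $E$ to that element together with $h$. This gives an exactly commuting pair $(h',e_0)$ in $E$. We then repair $h'$ into a positive contraction by functional calculus, and check that every estimate only loses a controlled multiple of $\alpha$. Throughout, write $k=h(1-h)$.

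\textbf{Step 1: a self-adjoint approximant.} By (ii) there is $e\in E$ with $\norm{ka-e}<\delta$. Set $e_s=(e+e^*)/2\in E_{sa}$. Since $k$ is a polynomial in $h$, we have $[h^2,a]=h[h,a]+[h,a]h$, so
$$
\norm{[k,a]}\le\norm{[h,a]}+\norm{[h^2,a]}<3\delta .
$$
Because $a$ is self-adjoint, $(ka)^*=ak$, and hence $\norm{(ka)^*-ka}<3\delta$. Therefore
$$
\norm{e_s-ka}\le\tfrac12\norm{e-ka}+\tfrac12\norm{e^*-(ka)^*}+\tfrac12\norm{(ka)^*-ka}<\tfrac52\delta .
$$
In particular $\norm{e_s}\le\norm{k}+\tfrac52\delta\le \tfrac14+\tfrac14<1$, using $\norm{k}\le\tfrac14$ and $\delta<\tfrac1{10}$.

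\textbf{Step 2: almost commutation and Lin's theorem.} Since $h$ commutes with $k$, we have $[h,ka]=k[h,a]$, which has norm less than $\delta/4$. Hence
$$
\norm{[h,e_s]}\le 2\norm{e_s-ka}+\norm{k[h,a]}<5\delta+\tfrac{\delta}{4}<6\delta<\delta_L(\alpha).
$$
Both $h$ and $e_s$ are self-adjoint contractions in $E$, so Theorem~\ref{thm:lin} gives commuting self-adjoint $h',e_0\in E$ with $\norm{h-h'}<\alpha$ and $\norm{e_s-e_0}<\alpha$.

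\textbf{Step 3: restoring positivity.} This is the step that needs care, because Lin's theorem does not preserve positivity or contractivity. Since $\norm{h'-h}<\alpha$ and $\sigma(h)\subseteq[0,1]$, the spectrum of $h'$ lies in the $\alpha$-neighbourhood of $[0,1]$. Let $f(t)=\min\{\max\{t,0\},1\}$ and put $h_0=f(h')\in E$. Then $h_0$ is a positive contraction, $\norm{h_0-h'}\le\alpha$, and so $\norm{h_0-h}<2\alpha$. Moreover $h_0$ lies in $C^*(h')$, so it still commutes exactly with $e_0$, which gives $[h_0,e_0]=0$.

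\textbf{Step 4: the estimates.} For (i)$'$ we have
$$
\norm{[h_0,a]}\le\norm{[h,a]}+2\norm{h_0-h}<\delta+4\alpha .
$$
For (ii)$'$, we have $h_0a\approx_{2\alpha}ha\in_\delta C$, and likewise $(1-h_0)a\approx_{2\alpha}(1-h)a\in_\delta D$. For (iii)$'$, write
$$
k_0-k=(h_0-h)-(h_0^2-h^2), \qquad h_0^2-h^2=h_0(h_0-h)+(h_0-h)h .
$$
Both $h_0$ and $h$ are contractions, so $\norm{h_0^2-h^2}<4\alpha$ and hence $\norm{k_0-k}<6\alpha$. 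Combining with Steps 1 and 2,
$$
\norm{k_0a-e_0}\le\norm{k_0-k}+\norm{ka-e_s}+\norm{e_s-e_0}<6\alpha+\tfrac52\delta+\alpha=\tfrac52\delta+7\alpha .
$$
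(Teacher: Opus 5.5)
Your proof is correct and follows the paper's argument step for step. You symmetrize the $E$-approximant of $h(1-h)a$ to get $\norm{ka-e_s}<\frac52\delta$ and $\norm{[h,e_s]}<6\delta$, apply Theorem~\ref{thm:lin} in $E$, and truncate $h'$ by functional calculus to get a positive contraction $h_0$ with $\norm{h_0-h}<2\alpha$; the same constants then propagate ($6\alpha$ for $\norm{k_0-k}$, hence $\frac52\delta+7\alpha$).
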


\begin{proof}
Set
$$
       k=h(1-h)=h-h^2.
$$
Since $h$ is a positive contraction, $0\leq k\leq \frac14$ and
$\norm{k}\leq\frac14$. Then % Expanding the commutator gives
\begin{equation}\label{eq:k-commutator}
 [k,a]=[h,a]-h[h,a]-[h,a]h,
\end{equation}
and therefore
\begin{equation}\label{eq:k-commutator-bound}
       \norm{[k,a]}\leq3\norm{[h,a]}<3\delta.
\end{equation}

Choose $e\in E$ with $\norm{ka-e}<\delta$ and set
$$
       e_{sa}=\frac12(e+e^*)\in E_{sa}.
$$
The element $ka$ need not be self-adjoint, and $(ka)^*=ak$.  Therefore
\begin{equation}
\begin{aligned}
 \norm{ka-e_{sa}}
 &\leq \frac12\norm{ka-e}
       +\frac12\norm{ka-e^*}                                      \\
 &\leq \frac12\delta
       +\frac12\bigl(\norm{ka-ak}+\norm{ak-e^*}\bigr)             \\
 &< \frac12\delta+\frac12(3\delta+\delta)
   =\frac52\delta.
\end{aligned}
\label{eq:symmetrization}
\end{equation}
Here $\norm{ak-e^*}=\norm{ka-e}<\delta$.

Since $h$ commutes with $k$,
$$
       [h,ka]=k[h,a].
$$
Consequently, using~\eqref{eq:symmetrization},
\begin{align}
 \norm{[h,e_{sa}]}
 &\leq \norm{[h,ka]}+\norm{[h,e_{sa}-ka]} \notag\\
 &\leq \norm{k}\,\norm{[h,a]}+2\norm{e_{sa}-ka} \notag\\
 &<\frac14\delta+5\delta
   <6\delta.                         \label{eq:almost-commuting}
\end{align}
Moreover,
$$
       \norm{e_{sa}}
       \leq\norm{ka}+\norm{e_{sa}-ka}
       <\frac14+\frac52\delta\leq\frac12,
$$
where the last inequality uses $\delta<1/10$.  Hence, $h$ and
$e_{sa}$ are self-adjoint contractions in $E$.  By
Theorem~\ref{thm:lin} and
\eqref{eq:almost-commuting}, there are commuting $h',e_0\in E_{sa}$
such that
\begin{equation}\label{eq:lin-output}
       \norm{h'-h}<\alpha,
       \quad
       \norm{e_0-e_{sa}}<\alpha.
\end{equation}

Let $f\colon\mathbb R\to[0,1]$ be the truncation function
$$
       f(t)=\max\{0,\min\{t,1\}\},
$$
and set $h_0=f(h')$. Then $0\leq h_0\leq 1$. Now \eqref{eq:lin-output} implies that the spectrum of $h'$ is contained in $(-\alpha,1+\alpha)$.  Therefore
\begin{equation}\label{eq:h0-h}
       \norm{h_0-h}
       \leq\norm{h_0-h'}+\norm{h'-h}<2\alpha.
\end{equation}
Since $h'$ commutes with $e_0$, $h_0$ also commutes with $e_0$, i.e.,
$[h_0,e_0]=0$.

Let $k_0=h_0-h_0^2$.  For two positive contractions $r,s$,
$$
 \norm{r(1-r)-s(1-s)}
 \leq \norm{r-s}+\norm{r^2-s^2}
 \leq3\norm{r-s}.
$$
(The second inequality follows from
$r^2-s^2=r(r-s)+(r-s)s$; this identity does not require $r$ and $s$
to commute.)
 Hence,  \eqref{eq:h0-h} implies
\begin{equation}\label{eq:k0-k}
       \norm{k_0-k}<6\alpha.
\end{equation}

Now we check the estimates.  For the commutator,
$$
\begin{aligned}
 \norm{[h_0,a]}
 &\leq\norm{[h,a]}+\norm{[h_0-h,a]}\\
 &<\delta+2\norm{h_0-h}\norm{a}
 <\delta+4\alpha.
\end{aligned}
$$
For the two distance estimates,
$$
 \dist(h_0a,C)
 \leq\dist(ha,C)+\norm{(h_0-h)a}
 <\delta+2\alpha,
$$
and similarly,
$$
 \dist((1-h_0)a,D)<\delta+2\alpha.
$$
Finally, by~\eqref{eq:symmetrization},~\eqref{eq:lin-output}, and
\eqref{eq:k0-k},
$$
\begin{aligned}
 \norm{k_0a-e_0}
 &\leq\norm{(k_0-k)a}+\norm{ka-e_{sa}}+\norm{e_{sa}-e_0}\\
 &<6\alpha+\frac52\delta+\alpha
 =\frac52\delta+7\alpha.
\end{aligned}
$$
\end{proof}

We use the commuting pair from Lemma~\ref{lem:correction} to construct
a finite-dimensional approximation.  The proof utilizes a partition of unity via three spectral projections and estimates every off-diagonal block.

\begin{lemma}\label{lem:three-corner}
Let $A$ be a unital $C^*$-algebra and let $a\in A_{sa}$ be a
contraction.  Suppose $C,D,E\subseteq A$ are unital
finite-dimensional $C^*$-subalgebras satisfying $E\subseteq C\cap D$.
Let $h_0\in E$ be a positive contraction, put
$k_0=h_0(1-h_0)$, and suppose $e_0\in E_{sa}$ commutes with $h_0$.
If, for some $\gamma>0$,
\begin{equation}\label{eq:gamma-hypotheses}
\begin{aligned}
 \norm{[h_0,a]}&<\gamma,\\
 \dist(h_0a,C)&<\gamma,\\
 \dist((1-h_0)a,D)&<\gamma,\\
 \norm{k_0a-e_0}&<\gamma,
\end{aligned}
\end{equation}
then there is a finite-dimensional unital $C^*$-subalgebra
$F\subseteq A$ and $b\in F_{sa}$ such that
\begin{equation}\label{eq:three-corner-conclusion}
       \norm{a-b}<12\gamma.
\end{equation}
\end{lemma}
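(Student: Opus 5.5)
The plan is to split $1_A$ into three mutually orthogonal spectral projections of $h_0$ and write $a$ as a $3\times 3$ block matrix with respect to them. Each diagonal block will be approximated inside one of $C$, $E$, $D$. Every off-diagonal block will turn out to be small: either because $e_0$ commutes with $h_0$, or by the Sylvester estimate.

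\emph{Setting up the decomposition.} Since $E$ is finite-dimensional and $h_0,e_0\in E$ commute, $h_0$ has finite spectrum in $[0,1]$. Set
$$P=\chi_{[0,1/3]}(h_0),\qquad R=\chi_{(1/3,2/3)}(h_0),\qquad Q=\chi_{[2/3,1]}(h_0).$$
These are projections in $E\subseteq C\cap D$ with $P+R+Q=1_A$, and each commutes with $h_0$, with $k_0$ and with $e_0$. By functional calculus in $E$:
\begin{itemize}
\item on $P$, the element $1-h_0$ is invertible, with $\|(1-h_0)^{-1}P\|\le 3/2$;
\item on $Q$, the element $h_0$ is invertible, with $\|h_0^{-1}Q\|\le 3/2$;
\item on $R$, we have $k_0\ge 2/9$, so $g:=k_0^{-1}R\in E$ satisfies $\|g\|\le 9/2$.
\end{itemize}
Take $F=PDP+RER+QCQ$. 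Because $P,R,Q\in C\cap D$ are orthogonal and sum to $1_A$, $F$ is a unital finite-dimensional $C^*$-subalgebra of $A$, being the direct sum of three corners.

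\emph{Off-diagonal blocks.} For $PaQ$ (and its adjoint $QaP$), apply Lemma~\ref{lem:sylvester} with $r=h_0$, $p=P$, $q=Q$, $\lambda=1/3$, $\mu=2/3$. This gives $\|PaQ\|\le 3\|P[h_0,a]Q\|<3\gamma$.

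For $PaR$ the key observation is that $a$ is self-adjoint. Hence $ak_0=(k_0a)^*$ satisfies $\|ak_0-e_0\|<\gamma$, since $e_0=e_0^*$. Therefore
$$PaR=Pak_0g\approx Pe_0g=e_0Pg=0,$$
with error at most $\gamma\|g\|$. This step is where exact commutation of $e_0$ with $h_0$ (hence with $P$ and $R$) is used. The same argument, using $Q$ in place of $P$, shows $QaR\approx0$. The blocks $RaP$ and $RaQ$ are the adjoints of these and need no separate treatment.

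\emph{Diagonal blocks.}
\begin{itemize}
\item Block $RaR$: write $RaR=Rak_0g\approx Re_0g=e_0g\in RER$, with error at most $\gamma\|g\|$.
\item Block $QaQ$: write $QaQ=h_0^{-1}Q\,(h_0a)\,Q$ and choose $c\in C$ with $\|h_0a-c\|<\gamma$. Since $h_0^{-1}Q$ and $Q$ lie in $E\subseteq C$, we get $QaQ\approx h_0^{-1}QcQ\in QCQ$, with error at most $\tfrac32\gamma$.
\item Block $PaP$: symmetrically, $PaP=(1-h_0)^{-1}P\,((1-h_0)a)\,P\approx (1-h_0)^{-1}PdP\in PDP$ for a suitable $d\in D$.
\end{itemize}
Let $b'\in F$ be the sum of these three diagonal approximants, and put $b=\tfrac12(b'+b'^*)\in F_{sa}$. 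Since $a=a^*$, replacing $b'$ by $b$ does not increase the error.

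\emph{Main obstacle.} The structure of the argument is clear; the delicate part is the constant. Combining the blocks crudely, by summing the norms of six off-diagonal pieces and three diagonal errors, overshoots $12\gamma$ (the factor $\|g\|\le 9/2$ is large). I would do the bookkeeping more carefully. First, the diagonal error $\|\operatorname{diag}\|$ is the maximum of the three block errors, not their sum. Second, the off-diagonal part $\sum_{i\neq j}P_iaP_j$ is self-adjoint, and its norm can be bounded using the pairing of each block with its adjoint. Third, I would tune the thresholds $1/3,2/3$: for instance, a narrower middle band for the Sylvester gap against a wider one for $\|g\|$. Alternatively, I would estimate $PaR$ via $P(ak_0-e_0)gR$, with $\|gR\|$ controlled more sharply. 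Balancing these choices is the main technical point, and I would adjust the cutoffs until the total is below $12\gamma$.
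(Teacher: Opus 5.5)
Your set-up matches the paper's proof, and each individual estimate you write down is correct. This includes the three spectral projections of $h_0$, the Sylvester bound $\|PaQ\|<3\gamma$, inverting $k_0$ on the middle band, approximating $PaP$ and $QaQ$ in $D$ and $C$, and $F=PDP\oplus RER\oplus QCQ$.

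\textbf{The gap: the constant.} You flag this yourself, but the remedy you propose cannot close it. Take the middle band to be $(s,t)$ and put $m=\min\{s(1-s),t(1-t)\}\le\frac14$. In your bookkeeping, three quantities each contribute $\gamma/m\ge 4\gamma$: the separate bound for $PaR$ (even after pairing it with $RaP$), the separate bound for $QaR$, and the middle diagonal block. Adding the Sylvester term $\gamma/(t-s)$, the guaranteed bound is $(3/m+1/(t-s))\gamma>12\gamma$ for every choice of cutoffs. At $1/3,2/3$ it is $\frac{33}{2}\gamma$. So tuning the thresholds does not help while $PaR$ and $QaR$ are estimated separately.

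\textbf{The missing step.} Estimate the whole middle column at once. Your computation for $PaR$ works verbatim with $1-R=P+Q$ in place of $P$:
\[
(1-R)aR=(1-R)ak_0g=(1-R)(ak_0-e_0)g+e_0(1-R)g=(1-R)(ak_0-e_0)g .
\]
Hence $y:=(1-R)aR$ satisfies $\|y\|<\frac92\gamma$, which controls all four middle-adjacent blocks simultaneously. Since $y^2=0$, we get $(y+y^*)^2=yy^*+y^*y$, with the two summands in the orthogonal corners $RAR$ and $(1-R)A(1-R)$. Therefore $\|y+y^*\|=\|y\|$. In the same way, $x=PaQ$ gives $\|x+x^*\|=\|x\|<3\gamma$.

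The off-diagonal part of $a$ is exactly $y+y^*+x+x^*$, so its norm is less than $\frac{15}{2}\gamma$. Adding the diagonal error $\frac92\gamma$ gives $\|a-b\|<12\gamma$, with the cutoffs $1/3,2/3$ unchanged. This is the adjoint of the paper's identity $p_\diamond a(1-p_\diamond)=r_\diamond p_\diamond(k_0a-e_0)(1-p_\diamond)$.

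As written, your argument proves the lemma with $\frac{33}{2}$ in place of $12$. That would still suffice for the main theorem after rescaling $\gamma$, but it does not prove the statement as given.
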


\begin{proof}
Since $E$ is finite-dimensional, $h_0$ has finite spectrum.  Define
the spectral projections
\begin{equation}\label{eq:spectral-projections}
\begin{aligned}
 p_0&=\mathds{1}_{[0,1/3)}(h_0),\\
 p_\diamond&=\mathds{1}_{[1/3,2/3]}(h_0),\\
 p_1&=\mathds{1}_{(2/3,1]}(h_0).
\end{aligned}
\end{equation}
%Although the displayed characteristic functions are discontinuous on
%$\mathbb R$, their restrictions to the finite set $\sigma(h_0)$ are
%continuous.  Each restriction extends to a continuous function on
%$\mathbb R$, so the
These three projections belong to
$C^*(h_0)\subseteq E$. % They are pairwise orthogonal and sum to $1$.
Since $e_0$ and $k_0$ commute with $h_0$, they also commute with
$p_0,p_\diamond,p_1$.

We first estimate the off-diagonal terms of $a$ relative to these
projections.  On $[1/3,2/3]$, the scalar function $t\mapsto t(1-t)$
is bounded below by $2/9$.  Hence,
$$
       p_\diamond k_0p_\diamond\geq\frac29p_\diamond.
$$
If $p_\diamond\neq0$, the element $p_\diamond k_0p_\diamond$ is invertible in the
corner $p_\diamond Ep_\diamond$.  Define
$$
r_\diamond=
\begin{cases}
 (p_\diamond k_0p_\diamond)^{-1},&p_\diamond\neq0,\\
 0,&p_\diamond=0.
\end{cases}
$$
Then
$$
r_\diamond\in p_\diamond Ep_\diamond, \quad \norm{r_\diamond}\leq\frac92,\quad
%This convention makes every formula below valid also when the middle projection vanishes.
%Because $p_\diamond$ is a spectral projection of $h_0$, it commutes with
%$k_0=h_0(1-h_0)$.
%Hence,
%$$
%       p_\diamond k_0=k_0p_\diamond=p_\diamond k_0p_\diamond,
       r_\diamond(p_\diamond k_0p_\diamond)=p_\diamond.
$$
Moreover, $e_0$ commutes with $p_\diamond$, so
$p_\diamond e_0(1-p_\diamond)=0$.  %Keep the corner projections at each step,
Now we obtain
$$
\begin{aligned}
 p_\diamond a(1-p_\diamond)
 &=r_\diamond(p_\diamond k_0p_\diamond)\,p_\diamond a(1-p_\diamond)\\
 &=r_\diamond p_\diamond k_0p_\diamond a(1-p_\diamond)\\
 &=r_\diamond p_\diamond k_0a(1-p_\diamond)\\
 &=r_\diamond p_\diamond(k_0a-e_0)(1-p_\diamond).
\end{aligned}
$$
Here, the third equality uses the identity $p_\diamond k_0p_\diamond=p_\diamond k_0$ and  the final equality
uses $p_\diamond e_0(1-p_\diamond)=0$.
It follows that
\begin{equation}\label{eq:middle-off-diagonal}
       \norm{p_\diamond a(1-p_\diamond)}<\frac92\gamma.
\end{equation}
%Since $a=a^*$, the adjoint corner has the same norm.

Now we  set
$$
       x=p_0ap_1,\qquad
       H_0=p_0h_0p_0,\qquad
       H_1=p_1h_0p_1.
$$
Then $H_0\leq\frac13p_0$, $H_1\geq\frac23p_1$, and
$$
       H_0x-xH_1=p_0[h_0,a]p_1.
$$
Then Lemma~\ref{lem:sylvester} implies
\begin{equation}\label{eq:outer-off-diagonal}
       \norm{p_0ap_1}<3\gamma.
\end{equation}
%The adjoint corner $p_1ap_0$ has the same norm because $a=a^*$.

Define the block diagonal part
$$
       a_\Delta=p_0ap_0+p_\diamond ap_\diamond+p_1ap_1.
$$
%Relative to the ordered decomposition $1=p_0+p_\diamond+p_1$,
The off-diagonal part of $a$ has the following $3$-by-$3$ block form.  The
four blocks adjacent to the middle row or column are controlled by
\eqref{eq:middle-off-diagonal}; the two outer blocks are controlled by
\eqref{eq:outer-off-diagonal}:
$$
\begin{array}{c|ccc}
 &p_0&p_\diamond&p_1\\ \hline
p_0&*&p_0ap_\diamond&p_0ap_1\\[1mm]
p_\diamond&p_\diamond ap_0&*&p_\diamond ap_1\\ [1mm]
p_1&p_1ap_0&p_1ap_\diamond&*
\end{array}
$$
%To estimate the norm of this matrix, set
%$$       X=p_\diamond a(1-p_\diamond),
%       \quad      Y=p_0ap_1.$$
%The four blocks meeting the middle region form $X+X^*$, while the two
%outer blocks form $Y+Y^*$.  Thus
Note that
\begin{equation}\label{eq:matrix-splitting}
       a-a_\Delta=p_\diamond a(1-p_\diamond)+(1-p_\diamond) ap_\diamond+x+x^*.
\end{equation}
As $p_0p_1=0$, we have $x^2=(x^*)^2=0$, and hence,
$$
       (x+x^*)^2=xx^*+x^*x.
$$
The two positive summands on the right belong to the orthogonal
corners $p_0Ap_0$ and $p_1Ap_1$. By the $C^*$-identity,
$$
 \norm{x+x^*}^2
 =\norm{xx^*+x^*x}
 =\max\{\norm{xx^*},\norm{x^*x}\}
 =\norm{x}^2,
$$
we obtain $\norm{x+x^*}=\norm{x}$.

Similarly, using
$p_\diamond (1-p_\diamond)=0$,
$$\norm{p_\diamond a(1-p_\diamond)+(1-p_\diamond) ap_\diamond}=\norm{p_\diamond a(1-p_\diamond)}.$$
 Consequently,
\eqref{eq:middle-off-diagonal},~\eqref{eq:outer-off-diagonal}, and
\eqref{eq:matrix-splitting} give the sharper estimate
\begin{equation}\label{eq:block-diagonal-error}
\begin{aligned}
 \norm{a-a_\Delta}
 &\leq{\norm{p_\diamond a(1-p_\diamond)}}+\norm{x}\\
 & <\frac92\gamma+3\gamma
 =\frac{15}{2}\gamma.
\end{aligned}
\end{equation}

Choose $c\in C$ and $d\in D$ satisfying
\begin{equation}\label{eq:c-d-choice}
       \norm{h_0a-c}<\gamma,
       \quad
       \norm{(1-h_0)a-d}<\gamma.
\end{equation}
We next approximate the three diagonal corners.

For the corner $p_0ap_0$, we use
$p_0(1-h_0)p_0\geq\frac23p_0$.  Define
$$
s_0=
\begin{cases}
 (p_0(1-h_0)p_0)^{-1},&p_0\neq0,\\
 0,&p_0=0.
\end{cases}
$$
Then $s_0\in p_0Ep_0\subset p_0Dp_0$ and $\norm{s_0}\leq\frac32$.
Set $$\widetilde b_0=s_0p_0dp_0\in p_0Dp_0.$$  Since
$$s_0p_0(1-h_0)p_0=p_0,$$ the second estimate in \eqref{eq:c-d-choice} yields
$$
 \norm{p_0ap_0-\widetilde b_0}
 =\norm{s_0p_0\bigl((1-h_0)a-d\bigr)p_0}
 <\frac32\gamma.
$$
Let $b_0=(\widetilde b_0+\widetilde b_0^*)/2$.  Since $p_0ap_0$ is
self-adjoint, we have
\begin{equation}\label{eq:low-corner}
       b_0\in(p_0Dp_0)_{sa},
       \quad
       \norm{p_0ap_0-b_0}<\frac32\gamma.
\end{equation}

For the corner $p_1ap_1$, we make use of $p_1h_0p_1\geq\frac23p_1$.  Define
$$
s_1=
\begin{cases}
 (p_1h_0p_1)^{-1},&p_1\neq0,\\
 0,&p_1=0.
\end{cases}
$$
Then $s_1\in p_1Ep_1\subset p_1Cp_1$ and $\norm{s_1}\leq\frac32$.  Set
$$\widetilde b_1=s_1p_1cp_1\in p_1Cp_1.$$
Similarly,
using the first estimate in~\eqref{eq:c-d-choice}, there exists an element $b_1\in(p_1Cp_1)_{sa}$ such that
\begin{equation}\label{eq:high-corner}
       \norm{p_1ap_1-b_1}<\frac32\gamma.
\end{equation}

For the middle corner, %set $s_m=r_m\in p_mEp_m$ and
 define
$$
       b_\diamond=r_\diamond p_\diamond e_0p_\diamond\in p_\diamond Ep_\diamond.
$$
The element $r_\diamond$ belongs to $C^*(h_0)p_\diamond$, and $e_0$ commutes with
$h_0$.  Thus $r_\diamond$ commutes with $p_\diamond e_0p_\diamond$, and $b_\diamond=b_\diamond^*$.
Moreover,
\begin{equation}\label{eq:middle-corner}
 \norm{p_\diamond ap_\diamond-b_\diamond}
 =\norm{r_\diamond p_\diamond(k_0a-e_0)p_\diamond}
 <\frac92\gamma.
\end{equation}

Since  $p_0,p_1,p_\diamond\in E\subseteq C\cap D$, the corner algebras
$p_0Dp_0$, $p_\diamond Ep_\diamond$, and $p_1Cp_1$ are finite-dimensional
$C^*$-subalgebras.  Their supports are pairwise orthogonal.  Therefore
$$
       F=p_0Dp_0\oplus p_\diamond Ep_\diamond\oplus p_1Cp_1
$$
forms a finite-dimensional $C^*$-subalgebra of $A$ with unit
$p_0+p_\diamond+p_1=1$.  The element
$
       b=b_0+b_\diamond+b_1
$
belongs to $F_{sa}$. The orthogonality of the three corners and
\eqref{eq:low-corner}--\eqref{eq:middle-corner} imply
\begin{equation}\label{eq:diagonal-approximation}
 \norm{a_\Delta-b}
 =\max\{\norm{p_0ap_0-b_0},
          \norm{p_\diamond ap_\diamond-b_\diamond},
          \norm{p_1ap_1-b_1}\}
 <\frac92\gamma.
\end{equation}
Combining~\eqref{eq:block-diagonal-error} and
\eqref{eq:diagonal-approximation}, we have
$$
       \norm{a-b}<\frac{15}{2}\gamma+\frac92\gamma
                  =12\gamma,
$$
as desired.
\end{proof}

\section{Main results}

%The following  is a basic approximation from the Stone-Weierstrass theorem, see \cite[Proposition 2.5.11]{L}.
%\begin{proposition}\label{app f}
%Let $A$ be a $C^*$-algebra and $F$ be a finite set in $C[0,1]$, then for any $\varepsilon>0$, there exists $\delta>0$ satisfying that for any $a,b\in A_+$ with $\|a\|,\|b\|\leq 1$, if $\|ab-ba\|<\delta$, then
%$$
%\|f(b)a-af(b)\|<\varepsilon,\quad f\in F.
%$$
%\end{proposition}

%The following lemma is an equivalent formulation of complexity rank one, see \cite[Proposition 2.16]{JW}.

%\begin{lemma} \rm
%A unital $C^*$-algebra $A$ has complexity rank at most one if and only if it has the following property.

%For any finite subset $X$ of the unit ball of $A$ and any $\varepsilon>0$, there exist finite-dimensional $C^*$-subalgebras $C, D$ and $E$ of $A$ that contain the unit $1_A$ and a positive contraction $h \in E$ such that

%(i) $\|[h, x]\|<\varepsilon$ for all $x \in X$;

%(ii) $h x \in_\varepsilon C,\left(1_A-h\right) x \in_\varepsilon D$ and $\left(1_A-h\right) h x \in_\varepsilon E$ for all $x \in X$;

%(iii) $E$ is contained in both $C$ and $D$.
%\end{lemma}

The following theorem the complexity rank part of \cite[Question 6.5]{JW} and %has some important applications.
yields the important applications below.

\begin{theorem}\label{main thm}
 Let $A$ be a unital $C^*$-algebra with complexity rank at most one. Then $A$ has real rank zero.
\end{theorem}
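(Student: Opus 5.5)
The plan is to verify directly the defining property: self-adjoint elements with finite spectrum are dense in $A_{sa}$. Since every self-adjoint element of a finite-dimensional $C^*$-algebra has finite spectrum, and spectrum in a unital $C^*$-subalgebra $F\ni 1_A$ agrees with spectrum in $A$, it suffices to show the following. For every self-adjoint $a\in A$ and every $\eta>0$ there are a finite-dimensional unital $C^*$-subalgebra $F\subseteq A$ and $b\in F_{sa}$ with $\norm{a-b}<\eta$. By rescaling we may assume $\norm{a}\leq1$, since the case $a=0$ is trivial and $\lambda F_{sa}\subseteq F_{sa}$.

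Fix such an $a$ and $\eta$. First choose $\alpha>0$ small enough that $12\cdot 7\alpha<\eta/2$. Then choose $\delta>0$ with $\delta<\min\{\frac1{10},\frac16\delta_L(\alpha)\}$ and $12\cdot\frac52\delta<\eta/2$, where $\delta_L(\alpha)$ is the constant from Theorem~\ref{thm:lin}. Apply Proposition~\ref{prop:strong-rank-one} to $X=\{a\}$ with tolerance $\delta$. This produces unital finite-dimensional $C,D,E\subseteq A$ with $E\subseteq C\cap D$ and a positive contraction $h\in E$ satisfying hypotheses (i)--(iii) of Lemma~\ref{lem:correction}.

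Lemma~\ref{lem:correction} then gives a positive contraction $h_0\in E$ and an element $e_0\in E_{sa}$ commuting with $h_0$. These satisfy
$$
\norm{[h_0,a]}<\delta+4\alpha,\qquad \dist(h_0a,C),\ \dist((1-h_0)a,D)<\delta+2\alpha,
$$
and
$$
\norm{k_0a-e_0}<\tfrac52\delta+7\alpha .
$$
So the four hypotheses \eqref{eq:gamma-hypotheses} of Lemma~\ref{lem:three-corner} hold with $\gamma=\frac52\delta+7\alpha$, which dominates each of the bounds above. Lemma~\ref{lem:three-corner} yields a finite-dimensional unital $F\subseteq A$ and $b\in F_{sa}$ with $\norm{a-b}<12\gamma<\eta$. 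Then $b$ has finite spectrum in $A$, so density follows and $A$ has real rank zero.

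The substantive work is already done in Lemmas~\ref{lem:correction} and~\ref{lem:three-corner}. The only points needing care are the following.
\begin{itemize}
\item The order of quantifiers: $\alpha$ must be fixed before $\delta$, because $\delta$ depends on $\delta_L(\alpha)$.
\item Checking that $\gamma$ dominates all four error terms.
\item Noting that the finite spectrum of $b$ in $F$ equals its spectrum in $A$, since $1_A\in F$ and spectra are invariant under passage to unital $C^*$-subalgebras.
\end{itemize}
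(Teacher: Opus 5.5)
Your proposal is correct and follows the paper's proof essentially verbatim: apply Proposition~\ref{prop:strong-rank-one} to $X=\{a\}$, use Lemma~\ref{lem:correction} to obtain the commuting pair $h_0,e_0$, and conclude with Lemma~\ref{lem:three-corner}. The only differences are in bookkeeping. You take $\gamma=\frac52\delta+7\alpha$ after choosing $\alpha$ and $\delta$, whereas the paper fixes $\gamma=\varepsilon/12$ first and then takes $\alpha=\gamma/10$ and $\delta<\gamma/10$. You also make the rescaling to $\norm{a}\leq1$ and the spectral-permanence step explicit, which the paper leaves implicit.
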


\begin{proof}
Suppose that $a\in A_{sa}$ satisfies $\norm{a}\leq1$.

Fix $\varepsilon>0$ and set
\begin{equation}\label{eq:parameter-gamma-alpha}
       \gamma=\frac{\varepsilon}{12},
       \qquad
       \alpha=\frac{\gamma}{10}.
\end{equation}
Let $\delta_{L}(\alpha)$ be the constant supplied by
Theorem~\ref{thm:lin}.  Choose $\delta>0$ such that
\begin{equation}\label{eq:parameter-delta}
       \delta<\min\left\{
          \frac1{10},\frac{\gamma}{10},
          \frac1{6}\delta_{L}(\alpha)
       \right\}.
\end{equation}

Apply Proposition~\ref{prop:strong-rank-one} to $X=\{a\}$ and
$\delta$.  We obtain unital finite-dimensional
$C^*$-subalgebras $C,D,E\subseteq A$, with $E\subseteq C\cap D$, and
a positive contraction $h\in E$ satisfying the hypothesis in
Lemma \ref{lem:correction}.  %The last condition in
%\eqref{eq:parameter-delta} is exactly~\eqref{eq:lin-smallness}.
Therefore, we have $h_0,e_0\in E_{sa}$, with
$h_0$ a positive contraction and $[h_0,e_0]=0$, satisfying
the hypotheses of
Lemma~\ref{lem:three-corner}.  Indeed,
\eqref{eq:parameter-gamma-alpha} and~\eqref{eq:parameter-delta} give
\begin{align*}
 \delta+4\alpha
 &<\frac{\gamma}{10}+\frac{4}{10}\gamma
   =\frac{1}{2}\gamma<\gamma,\\
 \delta+2\alpha
 &<\frac{\gamma}{10}+\frac{2}{10}\gamma
   =\frac{3}{10}\gamma<\gamma,\\
 \frac52\delta+7\alpha
 &<\frac52\frac{\gamma}{10}+\frac{7}{10}\gamma
   =\frac{19}{20}\gamma<\gamma.
\end{align*}
Thus all four hypotheses in~\eqref{eq:gamma-hypotheses} hold.
Lemma~\ref{lem:three-corner} produces a finite-dimensional unital
$C^*$-subalgebra $F\subseteq A$ and $b\in F_{sa}$ with
$$
       \norm{a-b}<12\gamma
                 =\varepsilon.
$$
The element $b$ has finite spectrum because it belongs to the unital
finite-dimensional algebra $F$. %Given an arbitrary $a\in A_{sa}$, apply the argument to $a/\max\{1,\|a\|\}$ and rescale the resulting finite-spectrum approximation.
Thus, %every self-adjoint element can be approximated by self-adjoint element with finite spectrum, and hence,
$A$ has real rank zero.
\end{proof}
\begin{remark}
 In \cite{JW}, the authors introduced weak complexity rank, which is bounded above by complexity rank. In general, these two ranks are not equal, see \cite[Corollary 4.2]{JW}. They compared (weak) finite complexity with  many other properties. Whether weak complexity rank one implies real rank zero remains open. %Note that the uniform Roe algebra of $\mathbb{Z}^2$ has complexity rank at most two by \cite[Ex. A.9]{WY} and does not have real rank zero by \cite[Thm. 3.1]{LW}, so it is certainly not true that (weak) finite complexity implies real rank zero in general.
\end{remark}

Many examples of spaces with finite complexity arising from groupoid theory can be found in \cite{GWY17}. Using the main result of \cite{ALSS}, the complexity rank of the transformation
groupoid associated to any free action of a virtually cyclic group on a finite-dimensional space is one; see \cite[Example A.11]{WY}. Now Theorem \ref{main thm} implies the following result.
%Combining our main result with \cite[Theorem 3.3]{BL},
\begin{corollary}
%Let $X$ be an infinite, second-countable, totally disconnected, compact Hausdorff space with finite covering dimension and the action $D_{\infty} \curvearrowright X$ of the infinite dihedral group $D_{\infty}$ on $X$ is free minimal, then $C(X) \rtimes D_{\infty}$ has real rank zero.
Let $X$ be a totally disconnected compact Hausdorff space.
Let $\Gamma \curvearrowright X$ be a free action of an infinite virtually cyclic group on $X$, then $C(X) \rtimes_r \Gamma$ has real rank zero.
\end{corollary}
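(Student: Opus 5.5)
The corollary reduces to Theorem~\ref{main thm} once we know that $C(X)\rtimes_r\Gamma$ has complexity rank at most one. The crossed product is unital because $X$ is compact. The plan is therefore to establish complexity rank at most one and then apply the main theorem directly.

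\emph{Step 1: rank of the groupoid.} Consider the transformation groupoid $G=X\rtimes\Gamma$. Since $X$ is totally disconnected, it has covering dimension zero, so it is in particular finite-dimensional. The action is free, so the main result of \cite{ALSS}, as recorded in \cite[Example A.11]{WY}, applies. It gives that $G$ has dynamic asymptotic dimension controlled by that of $\Gamma$ together with $\dim X$. Because $\Gamma$ is infinite virtually cyclic, this yields complexity rank one for $G$ in the sense of \cite{WY}.

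\emph{Step 2: passing to the $C^*$-algebra.} Next we invoke the comparison in \cite{WY} between the complexity rank of an ample groupoid and that of its reduced $C^*$-algebra. This shows that $C^*_r(G)\cong C(X)\rtimes_r\Gamma$ decomposes over algebras of the form $C^*_r$ of relatively compact open subgroupoids. Since $X$ is totally disconnected, each such subgroupoid is compact and ample (a union of compact open bisections), and its $C^*$-algebra is an AF (locally finite-dimensional) algebra. These pieces are unital after cutting by compact open sets, and any bookkeeping about units and sizes is absorbed by the closure properties of $\mathcal D_\alpha$ noted in the Remark following the definition of complexity rank. The conclusion is that the crossed product lies in $\mathcal D_1$.

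\emph{Step 3: conclusion.} Apply Theorem~\ref{main thm} to conclude that $C(X)\rtimes_r\Gamma$ has real rank zero.

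The main obstacle is Step 2, the transfer from groupoid-level complexity to the operator-algebraic Definition~\ref{def ck}. In particular, one must verify that the pieces are genuinely locally finite-dimensional, and this is exactly where total disconnectedness is used: for a positive-dimensional $X$ the pieces would be homogeneous algebras over $X$ rather than AF algebras. I would cite \cite[Appendix A]{WY} for this transfer rather than redo it.
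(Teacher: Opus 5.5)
Your proposal is correct and follows the same route as the paper. The paper gets complexity rank one for the transformation groupoid from \cite{ALSS} via \cite[Example A.11]{WY}, passes implicitly to $C(X)\rtimes_r\Gamma$ through the same ample-groupoid comparison \cite[Proposition A.8(ii)]{WY} used in Proposition~\ref{Prop: ranks}, and then applies Theorem~\ref{main thm}. There is one small slip in your sketch of Step 2: the pieces are relatively compact open subgroupoids, not compact ones, but you defer that transfer to \cite[Appendix A]{WY} just as the paper does, so the argument is unaffected.
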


\begin{definition}[\cite{LW}]\rm
A metric space $X$ has bounded geometry if for any $r>0$ there is a uniform
bound on the cardinalities of all $r$-balls in $X$. Let $X$ be a bounded geometry metric space, and let $\mathbb{C}_u[X]$ denote the *-algebra of all $X$-by-$X$ matrices $\left(a_{x y}\right)$ with uniformly bounded entries in $\mathbb{C}$ such that the propagation
$$
\operatorname{prop}(a):=\sup \left\{d(x, y) \mid a_{x y} \neq 0\right\}
$$
is finite. As $X$ has bounded geometry, $\mathbb{C}_u[X]$ acts on $\ell^2(X)$ by bounded operators. The uniform Roe algebra $C_u^*(X)$ of $X$ is the operator-norm closure of the image of $\mathbb{C}_u[X]$ in this representation.

When $X=G$ is a countable discrete group equipped with a left-invariant bounded geometry metric, we write $C_u^*|G|$ for the uniform Roe algebra of $G$ to avoid any possible confusion with the group $C^*$-algebras of $G$. %There is a well-known isomorphism $C_u^*|G| \cong \ell^{\infty}(G) \rtimes_r G$, where the action of $G$ on $\ell^{\infty}(G)$ is induced by the right translation action of $G$ on itself.
\end{definition}

%In \cite[App. A.2]{WY}, the authors showed that if $X$ is a bounded geometry metric space, then the geometric complexity of X in the sense of \cite{GTY} is an upper bound for the complexity rank of the uniform Roe algebra $C^*_u(X)$; there are other examples based on groupoid theory coming from \cite{GWY17}.The canonical isomorphism between the reduced $C^*$-algebra of the coarse groupoid and $C_u^*(X)$ is given in R. Willett and G. Yu, *Higher Index Theory*, Cambridge Studies in Advanced Mathematics **189**, Cambridge University Press, 2020, Proposition 10.29.

\begin{proposition}[\cite{WY}]\label{Prop: ranks}
Let $X$ be a bounded geometry metric space. Then the complexity rank of the uniform Roe algebra $C_u^*(X)$ is bounded above by the decomposition complexity  rank of $X$ in the sense of \cite[Definition 2.2.1]{GTY2}.
\end{proposition}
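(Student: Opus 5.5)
This is a cited result from \cite{WY} (their Appendix A.2), so the plan is to reconstruct the argument by induction on the decomposition complexity rank of $X$, following the scheme of \cite{GTY2}. Recall that $X$ has decomposition complexity rank $0$ if it is uniformly bounded. It has rank at most $\alpha$ if, for every $r>0$, one can write $X=U\cup V$ with $U=\bigsqcup_i U_i$ and $V=\bigsqcup_j V_j$ each $r$-disjoint unions, where the families $\{U_i\}$ and $\{V_j\}$ have rank $<\alpha$. Families are handled uniformly. I would prove the more general statement for bounded geometry families $\mathcal{Y}=\{Y_i\}$: the algebra $\prod_i C_u^*(Y_i)$, or rather its uniformly controlled version, has complexity rank at most the rank of $\mathcal{Y}$.

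\textbf{Base case.} If the $Y_i$ are uniformly bounded by $R$, bounded geometry bounds $|Y_i|$ uniformly by some $N$. The algebra is then a subalgebra of $\prod_i M_{|Y_i|}(\mathbb{C})$ with only finitely many matrix sizes. Its finite-propagation part is all of it, so it is locally finite-dimensional: approximate an element of $\ell^\infty(\mathbb{N},M_N)$ by elements taking finitely many values. This gives rank $0$.

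\textbf{Inductive step.} Fix a finite set $\mathcal{X}$ of finite-propagation operators, all of propagation at most $s$, and fix $\varepsilon>0$. Choose $r\gg s/\varepsilon$ and decompose $X=U\cup V$ as above at scale $r$. Build $h\in\ell^\infty(X)$, a positive contraction with Lipschitz constant $\lesssim 1/r$, such that $h=1$ on $U$ away from an $r/3$-neighbourhood of $V$ and $\operatorname{supp}h$ lies in the $r/3$-neighbourhood of $U$. Concretely, take $h(x)=\min\{1,\,3d(x,V^c)/r\}$ suitably adjusted. The Lipschitz bound gives $\norm{[h,x]}\lesssim s/r<\varepsilon$ for every $x$ of propagation at most $s$.

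Now define the three algebras:
\begin{itemize}
\item $C$ is the unitization of the controlled algebra of the family $\{N_{r/3}(U_i)\}$, which is a product over $i$ since the thickened pieces remain $r/3$-separated and $hx$ is block diagonal over them;
\item $D$ is defined in the same way from the $V_j$;
\item $E$ is obtained from the intersection family $\{N(U_i)\cap N(V_j)\}$.
\end{itemize}
Thickenings and intersections of families preserve rank $<\alpha$ by the permanence properties in \cite{GTY2}. Hence $C$, $D$ and $E$ lie in $\mathcal{D}_{<\alpha}$ by induction, and by the earlier Remark on inductive limits and local membership we may pass to closures. Condition (iii) of Definition \ref{def ck} holds because $E$ is literally contained in $C$ and in $D$.

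\textbf{Main obstacle.} The hardest part is the bookkeeping at each stage. One must keep $h$ inside $E$, which requires $h$ to be a diagonal operator supported where both thickenings meet, plus constants; this forces a careful choice of which neighbourhoods are used. One must also ensure that the induction works uniformly over families rather than over single spaces. Finally, since Definition \ref{def ck} needs unital subalgebras containing $1_A$, I would adjoin the unit to each corner algebra, which stays in the same class.
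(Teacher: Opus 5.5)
\paragraph{The gap: $h$ does not lie in $E$.} The gap is at the step you flag as the ``main obstacle'', and the remedy you suggest cannot work. Suppose $C$, $D$, $E$ are the unitizations of the block algebras of $\{N(U_i)\}$, $\{N(V_j)\}$ and $\{N(U_i)\cap N(V_j)\}$. Put $W=\bigcup_{i,j}N(U_i)\cap N(V_j)$. Then every element of $E$ acts as a single scalar $\lambda$ on $\ell^2(X\setminus W)$. Likewise, elements of $C$ are scalar off $N(U)$ and elements of $D$ are scalar off $N(V)$.

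A partition function $h$ must be close to $1$ on $U$ far from $V$ and close to $0$ on $V$ far from $U$. Your own $h$ equals $1$ on $U\setminus N_{r/3}(V)$ and vanishes off $N_{r/3}(U)$. Both of these regions lie outside $W$, so $h\notin E$.

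This is not a matter of choosing neighbourhoods. If $h=\lambda$ off $W$, then $hx\in_\varepsilon C$ and $(1-h)x\in_\varepsilon D$ force $\lambda\approx 0$ and $\lambda\approx 1$ simultaneously, for any $x$ that is far from scalar on $X\setminus N(U)$ and on $X\setminus N(V)$. Nor can you add the diagonal to $E$ alone, because condition (iii) then fails. Once $X\setminus N(U)$ has two points, $\ell^\infty(X)$ is not even approximately contained in $\mathbb{C}1+\prod_i B(\ell^2(N(U_i)))$.

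\paragraph{The fix.} The missing idea is that all three algebras must contain the whole diagonal $\ell^\infty(X)$. Take $t=r/3+s$ and $r>6s$.
\begin{itemize}
\item Let $C$ be the operators in $C_u^*(X)$ that are block diagonal for the partition $\{N_t(U_i)\}_i\cup\{\{y\}: y\notin N_t(U)\}$.
\item Define $D$ in the same way from the $V_j$.
\item Define $E$ from the partition $\{N_t(U_i)\cap N_t(V_j)\}_{i,j}$ together with the singletons of the complement.
\end{itemize}
Then the following hold.
\begin{itemize}
\item $h\in\ell^\infty(X)\subseteq E$.
\item The partition for $E$ refines those for $C$ and $D$, so $E\subseteq C\cap D$ exactly.
\item Your support considerations give $hx\in C$, $(1-h)x\in D$ and $h(1-h)x\in E$ exactly, for $x$ of propagation at most $s$.
\item Each algebra is $\ell^\infty$ of its singleton part, direct sum the controlled product algebra of a family of rank $<\alpha$. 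It therefore lies in $\bigcup_{\beta<\alpha}\mathcal{D}_\beta$ by your induction and closure under finite direct sums.
\end{itemize}
With this change your direct induction goes through.

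\paragraph{Comparison with the paper.} The paper does not run this induction by hand. It identifies $C_u^*(X)$ with $C_r^*(G(X))$ for the coarse groupoid, which is ample with compact totally disconnected unit space $\beta X$. It then quotes Proposition A.8(ii) and Example A.9 of Willett--Yu. In that picture $C$, $D$, $E$ are reduced algebras of open subgroupoids containing the entire unit space. They therefore automatically contain $C(\beta X)=\ell^\infty(X)$, which is exactly the ingredient your unitized algebras lack.
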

\begin{proof}
%Let $G(X)$ be the coarse groupoid of $X$. By \cite[Proposition 3.2]{STY}, $G(X)$ is a locally compact Hausdorff $\acute{\rm e}$tale principal groupoid with unit space $\beta X$. Moreover, the closures of the graphs of controlled partial translations form a basis of compact open bisections, so $G(X)$ is ample. Since $\beta X$ is compact,
Let $G(X)$ be the coarse groupoid of $X$. By \cite[Proposition 3.2]{STY}, $G(X)$ is a locally compact Hausdorff $\acute{\rm e}$tale  principal groupoid whose unit space is canonically identified with $\beta X$. Since $X$ is discrete, $\beta X$ is compact and totally disconnected. Hence, $G(X)$ is ample. Now
$G(X)$ satisfies all the hypotheses
of \cite[Proposition A.8 (ii)]{WY}. Therefore, the complexity rank of $C_r^*(G(X))$ is bounded by the dynamical complexity rank of $G(X)$, while \cite[Example A.9]{WY}  shows the dynamical complexity rank of $G(X)$ coincides with the decomposition complexity rank of $X$. Finally, the canonical isomorphism between the reduced $C^*$-algebra of the coarse groupoid $C_r^*(G(X))$ and $C_u^*(X)$ is given in \cite[Proposition 10.29]{Roe}.
\end{proof}

%Since $\operatorname{asdim}(X)\leq 1$, for every $r>0$ there is %a decomposition $X=X_0\cup X_1$ such that each $X_i$ is a union %of an $r$-disjoint uniformly bounded family. It follows directly %from [11, Definition 2.2.1] that $X$ has %finite-decomposition-complexity rank at most one

Furthermore, $X$ having asymptotic dimension one implies that the decomposition complexity rank of $X$ is at most one \cite{GTY2}. These facts naturally lead to the following result.
\begin{theorem}
Let $X$ be a bounded geometry metric space with asymptotic dimension at most one. Then the complexity rank of
$C_u^*(X)$ is at most one, and hence, the real rank of $C_u^*(X)$ is zero. If the asymptotic dimension of $X$ is one, the complexity rank of
$C_u^*(X)$ is exactly one.
\end{theorem}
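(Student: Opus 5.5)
The argument is a chain of earlier results, followed by a separate check of the exactness claim.

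For the upper bound, we first pass from $\operatorname{asdim}(X)\leq 1$ to decomposition complexity rank at most one. If $X$ has asymptotic dimension at most one, then for every $r>0$ there is a cover $X=X_0\cup X_1$ in which each $X_i$ is an $r$-disjoint union of a uniformly bounded family. A uniformly bounded family has decomposition complexity rank zero in the sense of \cite[Definition 2.2.1]{GTY2}. So this decomposition exhibits $X$ as decomposing, at every scale $r$, over rank-zero families. By definition, the decomposition complexity rank of $X$ is then at most one; this is the fact quoted from \cite{GTY2}.

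Proposition~\ref{Prop: ranks} then bounds the complexity rank of $C_u^*(X)$ by that of $X$, so $C_u^*(X)\in\mathcal{D}_1$. Since $C_u^*(X)$ is unital, with unit the identity matrix of propagation $0$, Theorem~\ref{main thm} applies and $C_u^*(X)$ has real rank zero.

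For exactness, suppose $\operatorname{asdim}(X)=1$. We must show the complexity rank is not zero, i.e.\ $C_u^*(X)$ is not locally finite-dimensional. The route I would take is the standard characterization, in the bounded geometry setting, that $C_u^*(X)$ is locally finite-dimensional if and only if $\operatorname{asdim}(X)=0$; this is presumably where the introduction's phrase ``corresponds to'' comes from. For the direction we need, assume local finite-dimensionality and argue toward asymptotic dimension zero. Take a finite-propagation partial translation $v$ of propagation $r$. Approximate the finite set consisting of $v$ and the diagonal algebra $\ell^\infty(X)$ by a finite-dimensional subalgebra, more precisely by a finite set of diagonal projections together with $v$. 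Then pass to a conditional expectation onto $\ell^\infty(X)$, whose minimal projections approximately commute with $v$. This should produce a partition of $X$ into uniformly bounded pieces that are $r$-separated, which means $\operatorname{asdim}(X)=0$, a contradiction.

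This last step is the main obstacle. Extracting a uniformly bounded, $r$-separated partition from an approximate finite-dimensional subalgebra needs care, because the subalgebra need not contain $\ell^\infty(X)$. I would first try to cite a known result, for instance \cite{LW} or \cite[Appendix]{WY}, stating that locally finite-dimensional $C_u^*(X)$ forces $\operatorname{asdim}(X)=0$. Failing that, an alternative route is K-theoretic: for $X$ of asymptotic dimension one, such as $\mathbb{Z}$, exhibit a nontrivial index or $K_1$ obstruction incompatible with being an AF-type algebra. For example, the bilateral shift on $\ell^2(\mathbb{Z})$ gives a nonzero class in $K_1(C_u^*|\mathbb{Z}|)$. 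Since $K_1$ commutes with the inductive limits involved, a locally finite-dimensional algebra has $K_1=0$, which gives the contradiction. In general I would rely on the cited characterization.
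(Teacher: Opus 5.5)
Your proposal matches the paper's proof. The upper bound comes from the fact (quoted from \cite{GTY2}) that asymptotic dimension at most one gives decomposition complexity rank at most one, combined with Proposition~\ref{Prop: ranks}; real rank zero then follows from Theorem~\ref{main thm}; and exactness is obtained by citing \cite[Theorem 2.2]{LW}, that a locally finite-dimensional $C_u^*(X)$ forces $X$ to have asymptotic dimension zero, which is the citation you say you would fall back on. Your sketched direct argument and the $K_1$ argument are therefore unnecessary; the $K_1$ route would in any case only cover particular spaces such as $\mathbb{Z}$, not a general $X$ of asymptotic dimension one.
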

\begin{proof}
By Proposition \ref{Prop: ranks}, the complexity rank of
$C_u^*(X)$ is at most one. Theorem \ref{main thm} implies $C_u^*(X)$ has real rank zero.
  If the complexity rank of $C_u^*(X)$ is zero, then $C_u^*(X)$ would be locally finite-dimensional. By \cite[Theorem 2.2]{LW}, this would imply that $X$ has asymptotic dimension zero, a contradiction. Thus, if the asymptotic dimension of $X$ is one, the complexity rank of $C_u^*(X)$  cannot be zero.
\end{proof}

 As mentioned in \cite[Remark 3.17]{JW}, Theorem \ref{main thm} gives an affirmative answer to Question 3.10 in \cite{LW}.
 \begin{corollary}\label{Roe rr0}
 The uniform Roe algebra $C_u^*|\mathbb{Z}|$ has real rank zero.
 \end{corollary}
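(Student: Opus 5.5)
The plan is to deduce Corollary~\ref{Roe rr0} from the preceding theorem on bounded geometry metric spaces of asymptotic dimension at most one. That theorem combines Proposition~\ref{Prop: ranks} with Theorem~\ref{main thm}. The only input left to check is a geometric fact: $\mathbb{Z}$ with its standard word metric $d(m,n)=|m-n|$ has bounded geometry and asymptotic dimension at most one.

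Bounded geometry is immediate. Every $r$-ball in $\mathbb{Z}$ has at most $2r+1$ points, uniformly in the center. Moreover, $C_u^*|\mathbb{Z}|$ does not depend, up to isomorphism, on the choice of left-invariant bounded geometry metric, since any two such metrics are coarsely equivalent via the identity map. So I will simply work with the standard metric.

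For the asymptotic dimension bound, I will give the usual explicit cover. Fix $r>0$ and an integer $N>r$. Let $\mathcal U_0$ be the family of intervals $[2kN,(2k+1)N)\cap\mathbb{Z}$ and $\mathcal U_1$ the family of intervals $[(2k+1)N,(2k+2)N)\cap\mathbb{Z}$, for $k\in\mathbb{Z}$. Both families are uniformly bounded, with diameter less than $N$. Within each family, distinct members are separated by a gap of length $N>r$, so each family is $r$-disjoint. Together the two families cover $\mathbb{Z}$. This is exactly the two-colour, uniformly bounded, $r$-disjoint decomposition required for $\operatorname{asdim}\mathbb{Z}\le 1$.

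Applying the preceding theorem then shows that $C_u^*|\mathbb{Z}|$ has complexity rank at most one, and therefore real rank zero by Theorem~\ref{main thm}. In fact the rank is exactly one, since $\mathbb{Z}$ is not of asymptotic dimension zero, although this is not needed here. I do not expect a genuine obstacle at this final stage: all the analytic work is already contained in Lemmas~\ref{lem:correction} and \ref{lem:three-corner}. The only point deserving care is the cited geometric input that asymptotic dimension at most one gives decomposition complexity rank at most one. I would verify it directly from the cover above, which already witnesses the decomposition of $\mathbb{Z}$ into two $r$-disjoint families of uniformly bounded sets, as \cite[Definition 2.2.1]{GTY2} requires.
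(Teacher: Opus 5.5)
Your proposal is correct and follows the paper's route: the corollary comes from applying the theorem on bounded geometry spaces of asymptotic dimension at most one (Proposition~\ref{Prop: ranks} combined with Theorem~\ref{main thm}) to $\mathbb{Z}$. Your two-colour cover by intervals of length $N$ supplies the input $\operatorname{asdim}\mathbb{Z}\le 1$, which the paper leaves implicit. It also directly witnesses decomposition complexity rank at most one.
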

%\begin{proof}
% From  \cite[Ex. A.9]{WY},  if $X$ is a bounded geometry metric space, the complexity rank of the uniform Roe algebra $C_u^*|X|$ is bounded above by the geometric complexity of $X$ in the sense of \cite{GTY}.  Thus, $C_u^*|\mathbb{Z}|$ has complexity rank no more than one therefore, by Theorem \ref{main thm}, it has real rank zero.
%\end{proof}

\iffalse
\begin{example}
 The construction lies in \cite[Example 3.8]{JW}. For each $k \in \mathbb{N}$, let $X_k=\left\{m \in \mathbb{Z}\mid |m| \leq k\right\}$ equipped with the subspace metric. Let $X:=\bigsqcup_{k \in \mathbb{N}} X_k$, equipped with any metric that restricts to the given metric on $X_k$, and that satisfies $d\left(X_k, X \backslash X_k\right) \rightarrow \infty$ as $k \rightarrow \infty$. Then using the limit space machinery of \cite{SW}, one can show that $C_u^*(X)$ admits a quotient $*$-homomorphism onto $C_u^*\left|\mathbb{Z}\right|$.  As real rank zero clearly passes to quotients, Theorem \ref{Roe rr0} implies that  $C_u^*(X)$ has real rank zero,  the results  of \cite{Wei} show that $C_u^*|X|$ is stably finite.
 By \cite[Theorem 2.2]{LW},  $C_u^*|X|$  doesn't have cancellation (stable rank one). This gives an example of a stably finite, real rank zero $C^*$-algebra which doesn't have cancellation (stable rank one) and answers a long standing open question \cite[p. 455]{B}.
\end{example}
\fi

Since the free group $\mathbb{F}_{n}$ ($n=2,3,\cdots$) has asymptotic dimension one (see \cite[Theorem 2]{BDK} and \cite{Roe}), the following corollary answers the question concerning $C_u^*|\mathbb{F}_2|$  raised in \cite[p. 111]{LW}.
\begin{corollary}
  For every $n=2,3,\cdots$, $C_u^*|\mathbb{F}_n|$ has real rank zero.
\end{corollary}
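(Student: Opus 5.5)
The plan is to deduce the corollary from the preceding theorem on bounded geometry metric spaces of asymptotic dimension at most one. The only thing to check is that $\mathbb{F}_n$, viewed as a metric space, satisfies its hypotheses. Fix a finite symmetric generating set $S$ of $\mathbb{F}_n$, for instance the free generators together with their inverses. Equip $\mathbb{F}_n$ with the left-invariant word metric $d_S$; this is the metric with respect to which $C_u^*|\mathbb{F}_n|$ is formed.

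The first step is bounded geometry. Every ball of radius $r$ in $(\mathbb{F}_n,d_S)$ is a left translate of the ball of radius $r$ about the identity. That ball has at most $\sum_{k=0}^{\lfloor r\rfloor}|S|^k$ elements, which gives the uniform bound on the cardinalities of $r$-balls.

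The second step is asymptotic dimension. As cited just before the statement, $\operatorname{asdim}\mathbb{F}_n=1$ \cite[Theorem 2]{BDK}. Geometrically, the Cayley graph is the $2n$-regular tree. For each $r$ one can cover it by two families of uniformly bounded, $r$-disjoint sets, taken from annuli of width about $r$ around the identity and split along branches. Asymptotic dimension is a coarse invariant, so it does not depend on the choice of finite generating set. Any other proper left-invariant metric is coarsely equivalent to $d_S$, so the choice of metric does not matter either. The uniform Roe algebra is also independent of these choices up to isomorphism.

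Given these two facts, the theorem above says that $C_u^*|\mathbb{F}_n|$ has complexity rank at most one, in fact exactly one. Theorem~\ref{main thm} then gives real rank zero. The only step that is not a formal citation is the asymptotic dimension computation. I will cite it rather than reprove it; if a self-contained argument were wanted, the tree decomposition sketched above would be the place to spell it out.
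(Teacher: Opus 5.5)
Your proposal is correct and follows the paper's route: the paper likewise cites $\operatorname{asdim}\mathbb{F}_n=1$ (via \cite[Theorem 2]{BDK}) and applies the preceding theorem on bounded geometry spaces of asymptotic dimension at most one, which already gives real rank zero. Your explicit bounded-geometry count and your remarks on independence of the metric are routine checks that the paper leaves implicit.
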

\begin{remark}
  For the countably generated free group $\mathbb{F}_\infty$, if it is equipped with any proper left-invariant metric, then it has bounded geometry and asymptotic dimension one \cite[Theorem 2.1]{DS}. In this setting, $C_u^*|\mathbb{F}_{\infty}|$ has real rank zero.
\end{remark}

\begin{remark}
The uniform Roe algebra $C_u^*|\mathbb{Z}^2|$ has complexity rank
at most two by Proposition \ref{Prop: ranks},  and does not have real rank zero by \cite[Theorem 3.1]{LW}. Therefore, by Theorem \ref{main thm}, $C_u^*|\mathbb{Z}^2|$ must have complexity rank two. In fact, the stable rank of $C_u^*|\mathbb{Z}^n|$ equals two for all $n$, see \cite[Remark 2.5]{LW}.
\end{remark}

%\section*{Acknowledgements}

{\bf AI statement}
The authors gratefully acknowledge ChatGPT 5.6 Sol for its valuable suggestions for the Bochner integral and the Sylvester estimate. It is also used for language editing. All arguments and ideas are developed and substantially revised by the authors with the exception of preliminary drafts of the proofs of Lemma 2.7 which were developed with the help of ChatGPT. The authors take full responsibility of the content of this paper.

%The research of first author was supported by NNSF of China (No.:12101113,
%No.:11920101001) and the Fundamental Research Funds for the Central Universities (No.:2412021QD001). The second author was supported by NNSF of China (No.:12101102).

\end{document}